\newtheorem{theorem}{Theorem}[section]
\newtheorem{prop}[theorem]{Proposition}
\newtheorem{corr}[theorem]{Corollary}
\newtheorem{conj}[theorem]{Conjecture}
\newtheorem{rmk}[theorem]{Remark}
\newcommand{\nbk}[2]{[#1\backslash #2]}
\newcommand{\dxg}{d_\chi^G}
\newcommand{\kron}{\otimes}
\newcommand{\nlsum}{\sum\nolimits}
\newcommand{\set}[1]{\{ #1\}}
\newcommand{\norm}[1]{\Vert#1\Vert}
\newcommand{\Tc}{\mathcal{T}}
\newcommand{\Rc}{\mathcal{R}}
\newcommand{\Lc}{\mathcal{L}}
\newcommand{\sep}{;}
\date{Last edited: 12 Nov, 2014}
\numberwithin{equation}{section}
\begin{document} 
\title{Hlawka-Popoviciu inequalities on positive definite tensors}
\author{Wolfgang Berndt\and Suvrit Sra}
\maketitle

\begin{abstract}
  We prove inequalities on symmetric tensor sums of positive definite operators. In particular, we prove multivariable operator inequalities inspired by generalizations to the well-known Hlawka and Popoviciu inequalities. As corollaries, we obtain generalized Hlawka and Popoviciu inequalities for determinants, permanents, and generalized matrix functions. The new operator inequalities and their corollaries contain a few recently published inequalities on positive definite matrices as special cases.
\end{abstract}

\paragraph{Keywords.}
Hlawka inequality\sep Popoviciu inequality \sep determinantal inequalities\sep operator inequalities\sep generalized matrix functions \sep tensor sums.
 

\section{Introduction}
Let $X$ be a complex inner product space with norm $\norm{\cdot}$, and let $a, b, c \in X$ be arbitrary vectors. The inequality
\begin{equation}
  \label{eq:10}
  \norm{a+b+c} + \norm{a}+\norm{b}+\norm{c} \geqslant \norm{a+b} + \norm{a+c} + \norm{b+c},
\end{equation}
is known as \emph{Hlawka's inequality}. It seems to have appeared first in a paper of Hornich~\citep{hornich} (who credits the proof to Hlawka). Several proofs are known, see e.g.~\citep[p.~100]{niculesu06} or \citep[pp.~171-72]{mitrinovic}. This inequality has witnessed a long series of investigations and generalizations---we refer the reader to the recent work of Fechner~\citep{fechner} for an excellent summary of related work as well as a substantial list of references. Fechner himself considers the \emph{functional Hlawka inequality}
\begin{equation}
 \label{eq:11}
  f(a+b+c) + f(a) + f(b) + f(c) \geqslant f(a+b) + f(a+c) + f(b+c),
\end{equation}
 and studies real valued functions $f$ on an abelian group $(A,+)$ that satisfy \eqref{eq:11}.

To our knowledge, all authors who previously published Hlawka type inequalities limited their attention to inequalities over the reals. In contrast, we study ``operator Hlawka inequalities,'' so that instead of the total order on the reals, we consider the L\"owner partial order `$\geqslant$' on Hermitian positive definite matrices or operators. As a consequence, we are able to recover as corollaries several Hlawka type inequalities for scalar valued matrix functions known as ``generalized matrix functions,'' which include the determinant and permanent as special cases.  

Observe also the resemblance between \eqref{eq:11} and \emph{Popoviciu's inequality}, which states for a convex function $f$ on a real interval $I$ and $a,b,c \in I$ that
\begin{equation}
  \label{eq:pop01}
  3f\bigl(\tfrac{a+b+c}{3}\bigr)+f(a)+f(b)+f(c)\geqslant
  2\bigl( f\bigl(\tfrac{a+b}{2}\bigr) +f\bigl(\tfrac{a+c}{2}\bigr) +f\bigl(\tfrac{b+c}{2}\bigr)\bigr). 
\end{equation}  
In fact, this resemblance will allow us to obtain some operator Popoviciu inequalities.

\subsection*{Notation and Background} Throughout this paper, matrices and tensors are denoted by upper case letters. Unless otherwise specified, all matrices are assumed to be of same size (say $m\times m$), self-adjoint and positive (semi) definite. The operator inequality $A \geqslant B$ denotes the L\"owner partial order, meaning that $A-B \geqslant 0$ is positive definite. Wherever multiplication is used, we mean tensor products (though unusual, we use this notation for aesthetic reasons to keep the ``visual burden'' of our proofs low); thus for arbitrary matrices $A$, $B$:
\begin{align*}
  A^p    &\equiv A^{\kron p} = A\kron A \kron \cdots \kron A\quad (p\ \ \text{times})\\
  A^pB^q &\equiv (A^{\kron p}) \kron (B^{\kron q})\qquad\quad\qquad(\text{integers\ } p, q).
\end{align*}
Note that this multiplication is noncommutative, so $AB \neq BA$.   

\noindent We write $[n]$ to denote the set $\set{1,2,\ldots,n}$ and $\nbk{n}{k}$ to denote the set $[n] \setminus \set{k}$.     

\noindent For some indexes, we use \textsc{Matlab} notation, e.g. the form $i=1\!:\!2\!:\!2k-1$ meaning that $i$ ``steps by 2,'' taking on only the values $1,3,5,\ldots,2k-1$.

Since the entire paper relies extensively on elementary properties of Kronecker (tensor) products, let us briefly recall these below.
\begin{prop}
  \label{prop.basic}
  Let $A, B, C, D$ be positive definite operators. Then,\vspace*{-5pt}
  \begin{enumerate}[(i)]
    \setlength{\itemsep}{-1pt}
  \item $AB \equiv A\kron B$ is also positive definite
  \item If $A \geqslant B$ and $C \geqslant D$ then $AC \geqslant BD$
  \item $A(B+C) = AB + AC$, $(A+B)C = AC + BC$
  \item $(A+B)^p \geqslant A^p + B^p$ for all $p \in \mathbb{N}$.
  \end{enumerate}
\end{prop}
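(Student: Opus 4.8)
The plan is to dispatch the four items in turn; each reduces to standard Kronecker-product bookkeeping together with the spectral theorem, and only (iv) needs an idea beyond that. For (i), I would take spectral decompositions $A=\sum_i\lambda_i u_iu_i^{*}$ and $B=\sum_j\mu_j v_jv_j^{*}$ with all $\lambda_i,\mu_j>0$, and apply the mixed-product rule $(X\kron Y)(Z\kron W)=(XZ)\kron(YW)$ to obtain $A\kron B=\sum_{i,j}\lambda_i\mu_j\,(u_i\kron v_j)(u_i\kron v_j)^{*}$. Since the vectors $u_i\kron v_j$ form an orthonormal basis and each coefficient $\lambda_i\mu_j$ is positive, this exhibits $A\kron B$ as self-adjoint with strictly positive spectrum; equivalently, $(A\kron B)^{*}=A^{*}\kron B^{*}=A\kron B$ and its eigenvalues are the products $\lambda_i\mu_j$.

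For (ii) the only algebraic move is the add-and-subtract identity
\[
  A\kron C-B\kron D=(A-B)\kron C+B\kron(C-D),
\]
which is valid by the bilinearity asserted in (iii). Since $A-B\geqslant 0$, $C\geqslant 0$, $B\geqslant 0$, and $C-D\geqslant 0$, each summand on the right is positive semidefinite by the (semidefinite form of the) argument used for (i), hence so is the left-hand side. Item (iii) itself is just the bilinearity of the tensor product, a defining property; I would either cite it or verify it on rank-one factors and extend by linearity.

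For (iv), I would expand $(A+B)^{\kron p}$ by iterating (iii) into $2^{p}$ terms of the form $X_1\kron\cdots\kron X_p$ with each $X_i\in\{A,B\}$. Exactly two of them are the pure powers $A^{\kron p}$ and $B^{\kron p}$, while every remaining term is a tensor product of positive definite operators and hence positive semidefinite by (i); therefore $(A+B)^{\kron p}=A^{\kron p}+B^{\kron p}+R$ with $R\geqslant 0$, which is the claim. (An induction on $p$ works equally well: $(A+B)^{\kron(p+1)}=(A+B)^{\kron p}\kron(A+B)\geqslant(A^{\kron p}+B^{\kron p})\kron(A+B)$ by (ii), and expanding via (iii) and discarding the two positive-semidefinite cross terms $A^{\kron p}\kron B$ and $B^{\kron p}\kron A$ finishes it.) The only genuine obstacle in the whole proposition is this last observation---that the combinatorial expansion leaves precisely the two pure tensor powers plus a sum of positive-semidefinite leftovers; items (i)--(iii) are routine.
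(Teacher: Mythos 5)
Your proof is correct. The paper states Proposition~\ref{prop.basic} without proof, presenting it as recalled elementary facts about Kronecker products; your arguments---the spectral decomposition and mixed-product rule for (i), the add-and-subtract identity $A\kron C-B\kron D=(A-B)\kron C+B\kron(C-D)$ for (ii), bilinearity for (iii), and the expansion of $(A+B)^{\kron p}$ into $2^p$ positive terms of which two are the pure powers for (iv)---are the standard ones and supply that missing verification correctly.
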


\section{ Hlawka type inequalities for three operators}
With this background we are ready to prove our first operator Hlawka inequality.
\begin{theorem}
  \label{thm:one}
  Let $A, B, C$ be positive definite operators. Then for each integer $p \geqslant 1$,
  \begin{equation}
    \label{eq:1}
    (A+B+C)^p + A^p + B^p + C^p \geqslant (A+B)^p + (A+C)^p + (B+C)^p.
  \end{equation}
\end{theorem}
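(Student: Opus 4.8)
The plan is to forget about induction entirely and instead expand every term of \eqref{eq:1} by the distributive law (Proposition~\ref{prop.basic}(iii)), reducing the inequality to a combinatorial identity followed by a one-line positivity argument. Call an ordered tuple $W=(W_1,\dots,W_p)$ with each $W_i\in\set{A,B,C}$ a \emph{word} of length $p$, and write $T_W:=W_1W_2\cdots W_p$ for the associated tensor product. Iterating (iii) gives, for every subset $S\subseteq\set{A,B,C}$,
\begin{equation*}
  \Bigl(\sum_{X\in S}X\Bigr)^{\!p}=\sum_{W\in S^{p}}T_W ,
\end{equation*}
so each of the seven summands in \eqref{eq:1} is a sum of terms $T_W$: the words with letters in all of $\set{A,B,C}$ for $(A+B+C)^p$, the words over a pair for each $(X+Y)^p$, and the single word $(A,\dots,A)$ for $A^p$, etc.

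Next I would bookkeep by the \emph{support} $\operatorname{supp}(W):=\set{W_1,\dots,W_p}$, i.e.\ the set of letters actually occurring in $W$. A word $W$ occurs in $(X+Y)^p$ precisely when $\operatorname{supp}(W)\subseteq\set{X,Y}$, and in $X^p$ precisely when $\operatorname{supp}(W)=\set{X}$. Comparing multiplicities of $T_W$ on the two sides: if $|\operatorname{supp}(W)|=1$ it appears twice on the left (once in $(A+B+C)^p$, once in the matching $X^p$) and twice on the right (in the two pairwise powers whose index sets contain that letter); if $|\operatorname{supp}(W)|=2$ it appears once on the left and once on the right (in the unique pairwise power whose index set contains $\operatorname{supp}(W)$); and if $|\operatorname{supp}(W)|=3$ it appears once on the left and not at all on the right. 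Hence everything cancels except the full-support words, leaving the identity
\begin{equation*}
  (A+B+C)^p + A^p+B^p+C^p-(A+B)^p-(A+C)^p-(B+C)^p=\sum_{\operatorname{supp}(W)=\set{A,B,C}}T_W .
\end{equation*}

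Finally I would close with Proposition~\ref{prop.basic}(i): each $T_W$ is a tensor product of positive definite operators, hence positive definite, so the right-hand side above is a sum of positive definite operators and therefore $\geqslant 0$; this is exactly \eqref{eq:1}. (For $p\le 2$ the index set is empty and \eqref{eq:1} is in fact an equality; for $p\ge 3$ the difference is positive definite.) I do not anticipate a genuine obstacle here — the only actual work is the multiplicity count in the middle step — and it is worth noting that the noncommutativity of $\kron$, which would wreck any attempt to mimic the classical scalar proofs, never interferes, because at no point do we reorder the factors within a word.
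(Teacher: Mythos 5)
Your proof is correct, and it takes a genuinely different route from the paper. The paper proves Theorem~\ref{thm:one} by induction on $p$: it multiplies the inductive hypothesis by $(A+B+C)$ on the right, isolates a residual term $\Tc$ of mixed products, and disposes of $\Tc$ using the superadditivity inequality Prop.~\ref{prop.basic}(iv). You instead expand all seven tensor powers multinomially into words $T_W$ and match multiplicities by support; your count is right in all three cases (support size $1$: two occurrences on each side; size $2$: one on each side; size $3$: one on the left, none on the right), so the difference of the two sides equals $\sum_{\operatorname{supp}(W)=\set{A,B,C}}T_W$, a sum of tensor products of positive definite operators, hence positive semidefinite by Prop.~\ref{prop.basic}(i) (an empty sum when $p\leqslant 2$, which recovers the paper's observation that those cases are equalities). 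What your approach buys is an \emph{exact identity} for the deficit rather than just an inequality, a cleaner explanation of when equality holds, and no induction at all; moreover the same inclusion--exclusion count (the number of $k$-subsets containing a fixed support of size $s$ is $\binom{n-s}{k-s}$, and $\sum_j(-1)^j\binom{m}{j}=0$ for $m\geqslant 1$) would prove the paper's multivariable Theorem~\ref{thm:main} in one stroke, bypassing its rather involved double induction. What the paper's inductive proof buys is that it rehearses, in the three-variable case, exactly the regrouping technique that the authors then deploy for Theorems~\ref{thm:main} and~\ref{thm:pop_hl2}. No gaps; your parenthetical remark that noncommutativity of $\kron$ never interferes (because factors within a word are never reordered) is exactly the right thing to flag.
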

\begin{proof}
  The case $p=1$ is trivial and holds with equality. Unsurprisingly, for $p=2$ we again have equality, since both sides expand to
  \begin{align*}
   2( A^{2} + B^{2} + C^{2} ) + AB + BA + AC + CA + BC + CB.
  \end{align*}
We prove the general claim by induction. Assume therefore that~\eqref{eq:1} holds for some integer $p \geqslant 2$. Then, 
  \begin{align*}
    (A+B+C)^{p+1} &= (A+B+C)^p(A+B+C)\\
    &\geqslant \left((A+B)^p + (A+C)^p + (B+C)^p - A^p - B^p - C^p \right)(A+B+C)\\
    &= (A+B)^{p+1} + (A+C)^{p+1} + (B+C)^{p+1} - A^{p+1} - B^{p+1} - C^{p+1} + \Tc,
  \end{align*}
  where the inequality follows from the induction hypothesis. The term $\Tc$ is defined as
  \begin{equation*}
    \Tc = (A+B)^pC + (A+C)^pB + (B+C)^pA - A^p(B+C) - B^p(A+C) - C^p(A+B).
  \end{equation*}
  It remains to show that $\Tc \geqslant 0$. But this follows immediately upon applying the superadditivity inequality Prop.~\ref{prop.basic}(iv) 
    to the first three terms of $\Tc$ and canceling. Thus, inequality~\eqref{eq:1} is proved.
\end{proof}

Theorem~\ref{thm:one} yields the following result of~\citet[Lemma~2.2]{tie11} as a corollary. (Note that the inequality~\eqref{eq:super} is called \emph{strong superadditivity} of tensor products; readers familiar with combinatorics may recognize it as  \emph{supermodularity}).
\begin{corr}
  \label{cor.supermod}
  Let $A,B,C$ be positive definite operators. Then for each integer $p \geqslant 1$;
  \begin{equation}
    \label{eq:super}
    (A+B+C)^p +A^p \geqslant (A+B)^p + (A+C)^p.
  \end{equation}
\end{corr}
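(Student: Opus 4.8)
The plan is to derive Corollary~\ref{cor.supermod} directly from Theorem~\ref{thm:one} by a suitable specialization, rather than re-running an induction. The key observation is that the two-operator inequality \eqref{eq:super} looks like \eqref{eq:1} with the role of $C$ ``switched off.'' So first I would try the naive substitution $C \to 0$ in \eqref{eq:1}. The left-hand side becomes $(A+B)^p + A^p + B^p + 0^p$ and the right-hand side becomes $(A+B)^p + A^p + B^p$, which is a triviality, not \eqref{eq:super}; so a literal $C=0$ substitution is too lossy. The fix is to keep $C$ as a genuine positive definite operator and instead relabel: apply Theorem~\ref{thm:one} to the triple $(A, B, C)$ but read off only the portion of the inequality that survives when we \emph{add back} the terms involving $B$ and $C$ together. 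Concretely, I would apply Theorem~\ref{thm:one} with the substitution $B \to B$, $C \to C$ unchanged, obtaining \eqref{eq:1}, and then separately invoke the superadditivity bound Prop.~\ref{prop.basic}(iv) in the form $(B+C)^p \geqslant B^p + C^p$ to eliminate the $(B+C)^p$ term; comparing shows \eqref{eq:1} already implies $(A+B+C)^p + A^p \geqslant (A+B)^p + (A+C)^p + (B+C)^p - B^p - C^p \geqslant (A+B)^p + (A+C)^p + 0$, which is weaker than what we want because it still carries $(A+C)^p$ on the right — that is actually fine, since we want exactly $(A+B)^p + (A+C)^p$ on the right, so this chain gives precisely \eqref{eq:super} once we drop the nonnegative leftover $(B+C)^p - B^p - C^p \geqslant 0$ in the correct direction.

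Let me restate the argument cleanly. Start from Theorem~\ref{thm:one}:
\begin{equation*}
  (A+B+C)^p + A^p + B^p + C^p \geqslant (A+B)^p + (A+C)^p + (B+C)^p.
\end{equation*}
By Prop.~\ref{prop.basic}(iv), $(B+C)^p \geqslant B^p + C^p$, so the right-hand side above is $\geqslant (A+B)^p + (A+C)^p + B^p + C^p$. Chaining the two inequalities gives
\begin{equation*}
  (A+B+C)^p + A^p + B^p + C^p \geqslant (A+B)^p + (A+C)^p + B^p + C^p,
\end{equation*}
and cancelling $B^p + C^p$ (which is legitimate in the Löwner order, since it amounts to subtracting the same operator from both sides) yields exactly \eqref{eq:super}. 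The only subtlety is that the claimed corollary is a two-operator statement, whereas this derivation uses an auxiliary third operator $C$; but since $C$ is arbitrary positive definite and the conclusion \eqref{eq:super} does not mention $C$, one simply notes that $C$ can be taken to be, say, the identity (or any fixed positive definite operator), so no generality is lost.

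The main (and really only) obstacle is bookkeeping: making sure the cancellation of the common summand $B^p + C^p$ is valid in the partial order — which it is, because $X \geqslant Y$ is equivalent to $X - Z \geqslant Y - Z$ for any self-adjoint $Z$ — and making sure that introducing a dummy operator $C$ to prove a statement that does not involve $C$ is logically sound, which it is. I expect no genuine difficulty here; the corollary is essentially immediate once one recognizes that Theorem~\ref{thm:one} is strictly stronger than \eqref{eq:super} after one application of superadditivity. An alternative, equally short route would be to prove \eqref{eq:super} by the same induction as in Theorem~\ref{thm:one} — the base case $p=1$ is again equality, and the inductive step produces an analogous remainder term $(A+B)^p C - A^p C$ (with $C$ now playing a dummy role, or with the second operator) that is nonnegative by Prop.~\ref{prop.basic}(iv); but the deduction from Theorem~\ref{thm:one} is cleaner and is what I would write.
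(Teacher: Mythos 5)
Your proof is correct and is exactly the paper's argument: apply Theorem~\ref{thm:one} and then use superadditivity $(B+C)^p \geqslant B^p + C^p$ from Prop.~\ref{prop.basic}(iv) to cancel $B^p+C^p$ from both sides. One correction to your closing paragraph: the claim that \eqref{eq:super} ``does not mention $C$'' and that $C$ may therefore be fixed to the identity is a misreading---the corollary is a genuine three-operator statement with $C$ appearing on both sides---but this aside is harmless and should simply be deleted, since your main chain already keeps $C$ arbitrary and thus proves the full statement.
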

\begin{proof}
  Immediate upon combining Prop.~\ref{prop.basic}(iv) with inequality~\eqref{eq:1}.
\end{proof}

Using the operator inequality~\eqref{eq:1} and restricting to suitable symmetry classes we can obtain Hlawka inequalities for determinants, permanents, and immanants. This line of thought is well-known in matrix analysis, see e.g.~\citep[p.~114]{bhatia07} and also~\citep{fzhang14}. 

Specifically, let $G$ be a subgroup of the symmetric group $\mathfrak{S}_m$ on $m$ letters, and let $\chi$ be an irreducible character of $G$. The $G$-\emph{immanant} (also known as \emph{generalized matrix function}~\citep{marcusv1,merris}) of an arbitrary $m \times m$ complex matrix $X$ is defined as
\begin{equation}
  \label{eq:2}
  d_{\chi}^G(X) := \sum_{\sigma \in G}\chi(\sigma)\prod_{i=1}^m a_{i,\sigma(i)}.
\end{equation}

\noindent When $G=\mathfrak{S}_m$ and $\chi(\sigma)=\text{sgn}(\sigma)$ we have $\dxg(X)=\det(X)$; $\chi(\sigma)\equiv 1$ yields the permanent, while other choices yield immanants~\citep{marcusv1}. Using arguments from multilinear algebra (e.g., \citep{marcusv1,ckli}), it can be shown \citep[p.~126]{marcusv1} that there exists a matrix $Z_{G,\chi}$ such that
\begin{equation}
  \label{eq:8}
  \dxg(X) = Z_{G,\chi}^*(\otimes^mX)Z_{G,\chi}.
\end{equation}

\noindent Using representation~\eqref{eq:8} and Theorem~\ref{thm:one} we then obtain the following corollary.
\begin{corr}
  \label{corr:one}
  Let $A, B, C$ be positive definite, and let $\dxg$ be as defined by~\eqref{eq:2}. Then,
  \begin{equation}
    \label{eq:3}
    \dxg(A+B+C) + \dxg(A)+\dxg(B) + \dxg(C) \geqslant \dxg(A+B) + \dxg(A+C) + \dxg(B+C).
  \end{equation}
\end{corr}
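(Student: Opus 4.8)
The plan is to reduce the scalar determinantal (more generally, generalized-matrix-function) inequality \eqref{eq:3} to the operator inequality \eqref{eq:1} via the representation \eqref{eq:8}, exactly as the text announces. The key observation is that $\dxg$, through \eqref{eq:8}, is of the form $X \mapsto Z^*(\otimes^m X)Z$ for a fixed vector (or matrix) $Z = Z_{G,\chi}$; in our Kronecker-product shorthand $\otimes^m X$ is just $X^m$ (i.e.\ $X^{\kron m}$ with $p=m$). So the map $\Phi \colon X \mapsto Z^*X^m Z$ is linear in the tensor power argument in the sense that it is monotone with respect to the L\"owner order: if $P \geqslant Q \geqslant 0$ are operators on the $m$-fold tensor space, then $Z^*PZ \geqslant Z^*QZ$ as scalars (indeed as $1\times 1$ matrices), and since these are scalars the L\"owner order is just the usual order on $\reals$.

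First I would record the monotonicity fact: for positive semidefinite operators $P \geqslant Q$ on $\otimes^m$-space and any conformable $Z$, one has $Z^*PZ - Z^*QZ = Z^*(P-Q)Z \geqslant 0$; when the ambient space has dimension so that $Z^*(\cdot)Z$ is a scalar (which is the case here, $Z_{G,\chi}$ being a single vector in the appropriate symmetry class), this says $Z^*PZ \geqslant Z^*QZ$ in $\reals$. Next I would apply Theorem~\ref{thm:one} with $p = m$ to the positive definite operators $A, B, C$, obtaining
\[
  (A+B+C)^m + A^m + B^m + C^m \;\geqslant\; (A+B)^m + (A+C)^m + (B+C)^m
\]
as an inequality between positive definite operators on the $m$-fold tensor space. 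Then I would left-multiply by $Z_{G,\chi}^*$ and right-multiply by $Z_{G,\chi}$, invoke the monotonicity fact term by term, and finally rewrite each resulting scalar $Z_{G,\chi}^*\,(\,\cdot\,)^m Z_{G,\chi}$ using \eqref{eq:8} as $\dxg(\,\cdot\,)$. Since $\dxg$ is additive on the inside only through the tensor-power expansion — but note $\dxg$ itself is \emph{not} additive — one must be careful: the quantity $\dxg(A+B)$ equals $Z^*(A+B)^m Z$ by \eqref{eq:8} directly with $X = A+B$, and likewise for the other five terms, so no expansion of $(A+B)^m$ is needed; the congruence by $Z$ is applied to \eqref{eq:1} as a whole.

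The only genuine subtlety — the step I would flag as the main obstacle — is making sure that \eqref{eq:8} really does produce a fixed matrix $Z_{G,\chi}$ independent of $X$, so that the congruence $X \mapsto Z_{G,\chi}^*(\otimes^m X)Z_{G,\chi}$ is a single fixed linear map one can apply to both sides of an operator inequality; this is precisely the content of \citep[p.~126]{marcusv1} and is the place where the irreducibility of $\chi$ and the structure of the symmetry class $V_\chi^G \subseteq \otimes^m \C^m$ enters. Granting that, the proof is a three-line congruence argument. I would write it as: apply Theorem~\ref{thm:one} with $p=m$; conjugate both sides by $Z_{G,\chi}$ using L\"owner monotonicity of $X \mapsto Z_{G,\chi}^* X Z_{G,\chi}$; identify each term via \eqref{eq:8}. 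A closing remark worth including is that the same argument, applied to Corollary~\ref{cor.supermod} in place of Theorem~\ref{thm:one}, yields the supermodularity inequality $\dxg(A+B+C) + \dxg(A) \geqslant \dxg(A+B) + \dxg(A+C)$ for generalized matrix functions.
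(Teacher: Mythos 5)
Your proposal is correct and takes exactly the paper's route: the paper's entire proof is the one-liner ``Congruence preserves L\"owner order, so we use~\eqref{eq:8} and~\eqref{eq:1} and conclude,'' which is precisely your apply-Theorem~\ref{thm:one}-with-$p=m$-then-conjugate-by-$Z_{G,\chi}$ argument spelled out in detail. Your closing remark about deriving the supermodularity inequality for $\dxg$ the same way is also consistent with what the paper does in the remark following the corollary.
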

\begin{proof}
  Congruence preserves L\"owner order, so we use~\eqref{eq:8} and~\eqref{eq:1} and conclude.
\end{proof}

\begin{rmk}
  The recent strong superadditivity result of~\citet[Theorem~3.2]{fzhang14} for three matrices follows by combining \eqref{eq:8}  with Corollary~\ref{corr:one} and Prop.~\ref{prop.basic}(iv).
\end{rmk}
\begin{rmk}
  M.~Lin brought to our notice his very recent result that establishes inequality~\eqref{eq:3} for the special case of determinants~\citep{mlin}. His proof uses only elementary methods, is entirely different from our approach, and is of instructive value.
\end{rmk}

\section{A multivariable tensor Hlawka inequality}
\label{sec:tensor}
It turns out that the above results can be obtained as corollaries of a more general operator inequality involving $n$ positive definite matrices. Before considering this more general inequality, let us mention a Hlawka type inequality that was conjectured by the first named author, which originally inspired this paper.

\begin{conj}[Berndt]
  \label{conj:one}
  For $n\geqslant 3$, let $A_1,\ldots,A_n$ be positive definite; for each $k=1,\dots,n$, let $s_k$ be the \emph{elementary symmetric determinantal polynomial}
  \begin{equation}
    \label{eq:4}
    s_k := \sum_{1 \leqslant i_1 < i_2 < \cdots < i_k \leqslant n} \det(A_{i_1} + \cdots + A_{i_k}).
  \end{equation}
  Then, the following generalization of the Hlawka inequality holds:
  \begin{equation}
    \label{eq:9}
    s_n + s_{n-2} + \cdots \geqslant s_{n-1} + s_{n-3} + \cdots.
  \end{equation}
\end{conj}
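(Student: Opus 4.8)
\quad The plan is to deduce~\eqref{eq:9} from a single operator (tensor) inequality, in the same spirit that Theorem~\ref{thm:one} feeds Corollary~\ref{corr:one}. Set $S_k^{(p)} := \sum_{1\leqslant i_1<\cdots<i_k\leqslant n}(A_{i_1}+\cdots+A_{i_k})^p$, the power being the $p$-fold Kronecker power $(\cdot)^{\kron p}$. I would first prove that for positive definite operators $A_1,\dots,A_n$ and every integer $p\geqslant 1$,
\[
  S_n^{(p)} + S_{n-2}^{(p)} + \cdots \;\geqslant\; S_{n-1}^{(p)} + S_{n-3}^{(p)} + \cdots ,
\]
equivalently $\sum_{k=1}^n(-1)^{n-k}S_k^{(p)}\geqslant 0$. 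For $n=3$ this is exactly Theorem~\ref{thm:one}, and~\eqref{eq:9} will drop out of the case $p=m$ via the representation~\eqref{eq:8}.

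The crux is a short inclusion--exclusion computation. By distributivity (Prop.~\ref{prop.basic}(iii)),
\[
  \Bigl(\sum_{i\in S}A_i\Bigr)^p \;=\; \sum_{\pi\colon [p]\to S} A_{\pi(1)}\kron A_{\pi(2)}\kron\cdots\kron A_{\pi(p)},
\]
the sum over all \emph{ordered} words $\pi$ from $[p]$ into $S$ (order is essential, since $\kron$ is noncommutative). Substituting this into $\sum_{k=1}^n(-1)^{n-k}S_k^{(p)}$ and interchanging the order of summation, the coefficient multiplying a word $A_{\pi(1)}\kron\cdots\kron A_{\pi(p)}$ with image set $T=\pi([p])$ is $\sum_{T\subseteq S\subseteq [n]}(-1)^{n-|S|}=\sum_{j=0}^{n-|T|}\binom{n-|T|}{j}(-1)^{n-|T|-j}=(1-1)^{\,n-|T|}$, which is $0$ unless $T=[n]$ and is $1$ when $T=[n]$. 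Hence
\[
  \sum_{k=1}^n(-1)^{n-k}S_k^{(p)} \;=\; \sum_{\substack{\pi\colon[p]\to[n]\\ \pi\ \text{surjective}}} A_{\pi(1)}\kron A_{\pi(2)}\kron\cdots\kron A_{\pi(p)}.
\]
Each word on the right is a Kronecker product of positive definite operators, hence positive definite by Prop.~\ref{prop.basic}(i); so the right-hand side is $\geqslant 0$, and it is literally $0$ when $p<n$, since then no surjection $[p]\to[n]$ exists. This proves the tensor inequality.

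To descend to determinants and generalized matrix functions, apply the congruence $Z_{G,\chi}^{*}(\cdot)Z_{G,\chi}$ with $p=m$. By~\eqref{eq:8} it sends each $S_k^{(m)}$ to $\sum_{|S|=k}\dxg\bigl(\sum_{i\in S}A_i\bigr)$, and congruence preserves the L\"owner order, so $\sum_{k=1}^n(-1)^{n-k}\sum_{|S|=k}\dxg\bigl(\sum_{i\in S}A_i\bigr)\geqslant 0$ for every subgroup $G\leqslant\mathfrak{S}_m$ and irreducible character $\chi$. Taking $G=\mathfrak{S}_m$ and $\chi=\text{sgn}$ gives $\dxg=\det$ and hence Conjecture~\ref{conj:one}; $\chi\equiv 1$ gives the permanent version and general irreducible $\chi$ the immanant versions.

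I do not expect a genuine analytic obstacle once one lifts to tensors: the signed sum of Kronecker powers simply collapses to a sum of visibly positive Kronecker words. The only place demanding care is the combinatorial bookkeeping --- because $\kron$ is noncommutative, the distributive expansion must be taken over ordered words rather than multisets, and one must verify that the alternating binomial sum of the containment coefficients really is the indicator of ``$\pi$ is onto.'' As a sanity check, the outcome reproduces the equalities in Theorem~\ref{thm:one} at $p=1,2$ (with $n=3$ there are indeed no surjections $[1]\to[3]$ or $[2]\to[3]$) and, more generally, the forced equality in~\eqref{eq:9} whenever $m<n$.
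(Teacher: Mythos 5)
Your proof is correct, and it takes a genuinely different route from the paper's. The paper deduces Conjecture~\ref{conj:one} from Theorem~\ref{thm:main}, which it proves by a double induction: an outer induction on $n$ with base case $n=3$ (Theorem~\ref{thm:one}), and an inner induction on $p$ in which the mixed terms $\Lc$ and $\Rc$ produced by tensoring with $\sum_i A_i$ must be carefully regrouped over the $(n-1)$-element sets $\nbk{n}{k}$ so that the hypothesis $C_{n-1,p-1}$ can be invoked. Your inclusion--exclusion identity
\[
  \sum_{k=1}^n(-1)^{n-k}S_{k,[n]}^p
  \;=\;\sum_{\substack{\pi\colon[p]\to[n]\\ \pi\ \mathrm{surjective}}}
  A_{\pi(1)}\kron A_{\pi(2)}\kron\cdots\kron A_{\pi(p)}
\]
replaces all of that with a single computation: expanding over \emph{ordered} words is exactly the right way to respect the noncommutativity of $\kron$, the containment coefficient $(1-1)^{n-|T|}$ is indeed the indicator that the word's image is all of $[n]$, and positivity is then immediate from Prop.~\ref{prop.basic}(i). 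The descent to determinants and $G$-immanants via~\eqref{eq:8} with $p=m$ is the same congruence argument the paper uses in Corollaries~\ref{cor.wolf} and~\ref{cor.dxg}. What your route buys: it is non-inductive, it subsumes Theorem~\ref{thm:one} rather than relying on it, and it produces an exact formula for the gap, which in particular shows that~\eqref{eq:6} (hence~\eqref{eq:9}) holds with \emph{equality} whenever $p<n$ --- a fact the paper's induction leaves invisible. What the paper's scheme buys is a reusable inductive template, which it deploys again for the Popoviciu-type inequalities of Section~\ref{sec:pop}, where the alternating structure is replaced by binomial weights and a closed-form collapse is less immediate.
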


Inequality~\eqref{eq:9} may come as a surprise to those who study Hlawka type inequalities. Indeed, \citet{freudenthal} considered generalizing the basic norm inequality~(\ref{eq:10}) to a form similar to~\eqref{eq:9}. Specifically, he asked whether for $n$ vectors $a_1,\ldots,a_n$ the inequality
\begin{equation*}
  \sum_{i=1}^n\norm{a_i} - \sum_{i<j}\norm{a_i+a_j} \pm \cdots + (-1)^{n-1}\norm{a_1+\cdots+a_n} \geqslant 0 
\end{equation*}
holds. According to \citet[p. 174]{mitrinovic}, this inequality was shown to be false for $n\ge4$ by W.~A.~J.~Luxemburg. Nevertheless, other multivariable generalizations do hold, among which the following seems to be of the most general kind: 

\begin{prop}[{\citep[Corollary~3.5]{radu}}]
  \label{prop:radu}
Let $H$ be a metric space, $n \geqslant 3$ and $k\in\{2,  \dots , n \}$. Then for all $a_1,\dots ,a_n\in H$, 
\begin{equation}
\sum_{1 \leqslant i_1 < i_2 < \cdots < i_k \leqslant n} \norm{a_{i_1} + \cdots +a_{i_k}} \leqslant \binom{n-2}{k-1} \sum_{i=1}^n\norm{a_i}+ \binom{n-2}{k-2}\biggl\Vert{\sum_{i=1}^n a_i} \biggr\Vert.
\end{equation}
\end{prop}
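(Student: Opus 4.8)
The plan is to prove Proposition~\ref{prop:radu} by reducing the multivariable statement to the basic three-term Hlawka inequality \eqref{eq:10} applied cleverly, or alternatively to prove it directly by a counting/averaging argument over all three-element sub-sums. Since the statement is attributed to Radu, I would follow the second, more self-contained route: fix a $k$-subset $S = \{i_1 < \cdots < i_k\}$ and estimate $\norm{\sum_{i \in S} a_i}$ by repeatedly invoking \eqref{eq:10}. The cleanest approach is induction on $k$. For the base case $k = 2$, the claim is $\sum_{i<j}\norm{a_i+a_j} \leqslant (n-2)\sum_i \norm{a_i} + \norm{\sum_i a_i}$, which is itself a known consequence of Hlawka (summing \eqref{eq:10} over triples with $a \mapsto a_i$, $b \mapsto a_j$, $c \mapsto \sum_{\ell \neq i,j} a_\ell$ and controlling the cross terms, or more simply by the standard ``generalized Hlawka'' argument). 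For the inductive step, I would write each $(k{+}1)$-element sum $\norm{a_{i_0} + \cdots + a_{i_k}}$ and apply Hlawka with the split $a = a_{i_0}$, $b = a_{i_1}$, $c = a_{i_2} + \cdots + a_{i_k}$, then sum over all $(k{+}1)$-subsets and reorganize, matching multiplicities against the binomial coefficients $\binom{n-2}{k-1}$ and $\binom{n-2}{k-2}$ via Vandermonde/Pascal identities.

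In carrying this out, the key steps in order are: (1) set $T := \sum_{i=1}^n a_i$ and reduce everything to bounding $\sum_{|S|=k} \norm{\sum_{i\in S} a_i}$ in terms of $\sum_i \norm{a_i}$ and $\norm{T}$; (2) establish the $k=2$ base case from \eqref{eq:10} by a symmetrization over triples; (3) for the inductive step, apply \eqref{eq:10} to a chosen three-way partition of each $k$-subset, obtaining $\norm{\sum_{S} a_i} \leqslant \norm{a_{p}} + \norm{a_{q}} + \norm{\sum_{S\setminus\{p,q\}} a_i} + \bigl(\text{terms of the form } \norm{a_p + a_q},\ \norm{a_p + \sum_{S\setminus\{p,q\}}},\ \ldots\bigr)$ — actually the sign in \eqref{eq:10} means we want the reverse grouping, so the right partition is $a = a_p$, $b = a_q$, $c = \sum_{S\setminus\{p,q\}} a_i$ giving $\norm{a_p+a_q} + \norm{a_p + \text{rest}} + \norm{a_q + \text{rest}} \leqslant \norm{\sum_S a_i} + \norm{a_p} + \norm{a_q} + \norm{\sum_{S\setminus\{p,q\}} a_i}$; (4) sum this inequality over all ways to choose $S$ and the pair $\{p,q\} \subset S$, so that on the left the $(k-1)$-element and $2$-element and larger sub-sums appear with computable multiplicities, apply the inductive hypothesis to the lower-order terms, and collect coefficients.

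The main obstacle I expect is the combinatorial bookkeeping in step (4): matching the multiplicities generated by summing the three-term Hlawka inequality over all subsets and all admissible splits against the precise binomial coefficients $\binom{n-2}{k-1}$ and $\binom{n-2}{k-2}$ in the target. One must be careful that the ``middle'' terms $\norm{a_p + a_q}$ produced on the left reassemble into exactly $\binom{n-2}{k-2}$ copies of a $2$-sum bound (feeding the base case) plus a clean remainder, and that the telescoping across levels $k = 2, 3, \ldots$ does not leak extra $\norm{T}$ terms. An alternative that sidesteps some of this is to use the known fact (provable from \eqref{eq:10} by induction) that the function $a \mapsto \norm{a}$ on a metric/normed space satisfies the ``$n$-variable Hlawka'' bound $\sum_{|S|=k}\norm{\sum_S a_i} \leqslant \binom{n-2}{k-1}\sum_i\norm{a_i} + \binom{n-2}{k-2}\norm{T}$ because the triangle inequality plus Hlawka force $\norm{\cdot}$ into the cone of functions for which this holds; one then only needs to verify the inequality on the extreme rays (e.g. $\norm{a} = |\langle a, u\rangle|$ for unit $u$, reducing to the scalar case on $\reals$), where it becomes an elementary sign-counting identity. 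I would present the direct inductive proof as primary and remark on the extreme-ray reduction as the conceptual reason it works.
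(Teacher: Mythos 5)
First, a point of reference: the paper does not prove Proposition~\ref{prop:radu} at all --- it is imported verbatim from \citet[Corollary~3.5]{radu} and serves purely as motivation for Section~\ref{sec:tensor} --- so there is no internal proof to compare your attempt against. Judged on its own, your proposal has a genuine gap rather than being a proof. The decisive step, the ``combinatorial bookkeeping'' of your step (4) that is supposed to produce the coefficients $\binom{n-2}{k-1}$ and $\binom{n-2}{k-2}$, is precisely the content of the theorem, and you explicitly leave it undone. Moreover, the inductive step as sketched points the wrong way: Hlawka's inequality \eqref{eq:10} applied to the split $S=\{p\}\cup\{q\}\cup(S\setminus\{p,q\})$ of a $k$-set bounds the three ``middle'' norms $\norm{a_p+a_q}$, $\bigl\Vert\sum_{i\in S\setminus\{q\}}a_i\bigr\Vert$, $\bigl\Vert\sum_{i\in S\setminus\{p\}}a_i\bigr\Vert$ from \emph{above} by the level-$k$ norm plus lower-level norms, i.e.\ it yields a \emph{lower} bound on $\bigl\Vert\sum_{i\in S}a_i\bigr\Vert$. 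An upward induction on $k$ needs an \emph{upper} bound on the level-$(k+1)$ sums in terms of level-$\leqslant k$ data, and no three-way split of a $(k+1)$-set puts the $(k+1)$-sum on the small side of \eqref{eq:10}. The arguments that do work (Djokovi\'c, Vasi\'c--Adamovi\'c, and \citet{radu}) instead partition the full index set $[n]$ so that the total $\sum_{i=1}^n a_i$ sits on the large side, and run a recursion between adjacent levels or an induction on $n$; your base case $k=2$ (Djokovi\'c's inequality) already requires such an argument and is likewise only asserted, since naively summing \eqref{eq:10} over pairs with $c=\sum_{\ell\neq i,j}a_\ell$ leaves uncancelled terms $\bigl\Vert\sum_{\ell\neq i,j}a_\ell\bigr\Vert$ that themselves need a Hlawka-type estimate.

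Two further cautions. Your closing ``extreme ray'' remark is actually closer to a viable proof than the induction: in a real inner product space $\norm{x}=c_d\int_{S^{d-1}}|\ip{x}{u}|\,du$, which reduces any inequality of the form $\sum_S\lambda_S\norm{\sum_{i\in S}a_i}\geqslant 0$ to the scalar case $\norm{x}=|x|$, where the claim becomes a sign-counting computation --- but that scalar verification is again the combinatorial heart of the matter and is not supplied. Finally, everything you invoke, beginning with \eqref{eq:10} itself, is specific to inner product spaces; the proposition as printed speaks of a metric space $H$ (already an imprecision in the paper's statement), and in a general normed space Hlawka's inequality, hence your entire scheme, can fail.
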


We now proceed to show that for positive operators a multivariable Hlawka type inequality does hold. Combined with representation~\eqref{eq:8}, it then implies not only the determinantal inequality~\eqref{eq:9} but also its $G$-immanant version.

For positive integers $k,n,p$ with $k \le n$ define the following symmetric tensor sums:
\begin{equation}
  \label{eq:7}
  S_{k,[n]}^p := \sum_{\substack{I \subseteq [n], |I|=k}} \bigl(\nlsum_{i\in I}A_i\bigr)^p.
\end{equation}
The main result of this paper is the following theorem.

\begin{theorem}
  \label{thm:main}
  Let $n\geqslant 3$ and $A_1,\ldots,A_n \geqslant 0$. Then, for $p \in\mathbb N$ the operator inequality 
  \begin{equation}
  \label{eq:6}
  S_{n,[n]}^p + S_{n-2,[n]}^p + \cdots \geqslant S_{n-1,[n]}^p + S_{n-3,[n]}^p + \cdots
\end{equation} 
holds.
\end{theorem}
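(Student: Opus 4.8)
The plan is to prove \eqref{eq:6} by a single brute-force expansion of all the tensor powers followed by an inclusion–exclusion cancellation, rather than by the induction on $p$ that handled the three-operator case in Theorem~\ref{thm:one}. First rewrite the difference of the two sides of \eqref{eq:6} as one alternating sum, $\Phi := \sum_{I\subseteq[n]}(-1)^{n-|I|}\bigl(\sum_{i\in I}A_i\bigr)^p$, in which the empty set contributes $0^p=0$ and so may be included harmlessly; since $S_{k,[n]}^p=\sum_{|I|=k}\bigl(\sum_{i\in I}A_i\bigr)^p$, the asserted inequality \eqref{eq:6} is exactly the statement $\Phi\geqslant 0$ in the L\"owner order. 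The whole proof is then about identifying what $\Phi$ actually is.

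The first step is to expand each tensor power by multilinearity: $\bigl(\sum_{i\in I}A_i\bigr)^p=\bigl(\sum_{i\in I}A_i\bigr)^{\kron p}=\sum_{w\colon[p]\to I}A_{w(1)}\kron\cdots\kron A_{w(p)}$, a sum over length-$p$ words with letters in $I$. Substituting this into $\Phi$ and interchanging the two finite summations, the tensor monomial $A_{w(1)}\kron\cdots\kron A_{w(p)}$ attached to a word $w\colon[p]\to[n]$ with letter set $U(w):=\{w(1),\dots,w(p)\}$ occurs in $\bigl(\sum_{i\in I}A_i\bigr)^p$ precisely when $U(w)\subseteq I$, so its coefficient in $\Phi$ is $\sum_{I\colon U(w)\subseteq I\subseteq[n]}(-1)^{n-|I|}$. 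The second step is the elementary identity $\sum_{I\colon U\subseteq I\subseteq[n]}(-1)^{n-|I|}=0^{\,n-|U|}$ (binomial theorem, with $0^0=1$): the coefficient is $1$ when $U(w)=[n]$ and $0$ otherwise. Hence every word that misses some index cancels, and $\Phi=\sum_{w}A_{w(1)}\kron\cdots\kron A_{w(p)}$, the sum ranging over the length-$p$ words that use each of $1,\dots,n$ at least once (in particular $\Phi=0$ whenever $p<n$, which recovers the equality cases such as $p=2,\ n=3$). Every surviving summand is a tensor product of the positive operators $A_1,\dots,A_n$, hence positive by Prop.~\ref{prop.basic}(i) applied repeatedly, and a sum of positive operators is positive; therefore $\Phi\geqslant 0$, which proves \eqref{eq:6}.

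The conceptual crux — and the only real obstacle — is recognizing that the Hlawka-type alternating sum is an inclusion–exclusion over subsets, so that after cancellation only the ``fully mixed'' words survive, and that these are manifestly positive because the tensor product is monotone and preserves positivity; once this is seen, the estimate is immediate. The one place that demands care is the noncommutativity of $\kron$ warned about in the notation section: for distinct orderings $w$ the monomials $A_{w(1)}\kron\cdots\kron A_{w(p)}$ are genuinely different operators, so the expansion must be carried out over \emph{ordered} words rather than multisets — but this costs nothing here, since the sign-cancellation step never merges two distinct words and positivity of each monomial does not depend on the order of its factors. Should a more self-contained argument be preferred, one can instead induct on $n$: splitting the subsets of $[n]$ according to whether they contain $n$ yields the mixed-difference identity $\Phi_n=\Phi_{n-1}[A_n]-\Phi_{n-1}[0]$, where $\Phi_m[C]$ denotes the same alternating sum over subsets of $[m]$ with an extra offset $C\geqslant 0$ added to every term, whereupon it suffices to check that $\Phi_m[\cdot]$ is nondecreasing in $C$. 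The direct expansion above is shorter, however, and it lays bare the equality cases, so that is the route I would take.
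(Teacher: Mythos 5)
Your proof is correct, and it takes a genuinely different---and in fact sharper---route than the paper's. The paper proves \eqref{eq:6} by a double induction on $n$ and $p$: the base case $n=3$ is Theorem~\ref{thm:one}, and the inductive step multiplies the statement for exponent $p-1$ by $(A_1+\cdots+A_n)$ and then regroups the resulting mixed terms $\Lc$ and $\Rc$ according to which index is pulled out, so that the hypothesis $C_{n-1,p-1}$ applies to each group. You instead expand every tensor power by multilinearity and recognize the alternating sum as an inclusion--exclusion: the coefficient of the ordered monomial $A_{w(1)}\kron\cdots\kron A_{w(p)}$ in $\Phi=\sum_{I\subseteq[n]}(-1)^{n-|I|}\bigl(\nlsum_{i\in I}A_i\bigr)^p$ is $\sum_{U(w)\subseteq I\subseteq[n]}(-1)^{n-|I|}=0^{\,n-|U(w)|}$, so only words hitting every index survive, and each surviving monomial is a tensor product of positive operators, hence positive. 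The binomial identity, the handling of the empty set, and your remark that the cancellation never merges two distinct ordered words (the one spot where noncommutativity of $\kron$ could bite) are all sound. What your argument buys beyond the paper's: an explicit closed form for the difference of the two sides of \eqref{eq:6} (the sum over surjective words $[p]\to[n]$), hence the exact equality cases ($\Phi=0$ precisely captures all $p<n$, not just $p=1$), and independence from both the $n=3$ case and any induction. What the paper's scheme buys is reusability: the same multiply-by-$\sum A_i$-and-regroup template drives the Popoviciu-type results of Section~\ref{sec:pop}, whereas your expansion is tailored to the unit coefficients in \eqref{eq:6}.
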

\begin{proof}
We prove the claim (call it $C_{n,p}$) by double induction. For $n=3$, $C_{3,p}$ is the Hlawka inequality established by Theorem~\ref{thm:one}. Fix $n\geq4$ and suppose we have proved $C_{n-1,p}$ for all $p$. We first assume that $n$ is even (the argument for odd $n$ will be similar).  

We now perform an induction on $p$. For $p=1$, the claim clearly holds as both sides of \eqref{eq:6} are equal. Assume therefore that the claim holds up to some integer $p-1$. Thus,
\begin{equation*}
  S_{n,[n]}^{p-1} + S_{n-2,[n]}^{p-1} + \cdots + S_{2,[n]}^{p-1}
  \geqslant S_{n-1,[n]}^{p-1} + S_{n-3,[n]}^{p-1} + \cdots + S_{1,[n]}^{p-1}.
\end{equation*}
Multiplying (i.e., taking tensor products) both sides by $(A_1+\cdots+A_n)$ on the right and using Prop.~\ref{prop.basic}(ii), we obtain
\begin{equation*}
  \sum_{j=2:2:n}S_{j,[n]}^p + \Lc
  \geqslant \sum_{j=1:2:n-1}S_{j,[n]}^p + \Rc,
\end{equation*}
where $\Lc$ and $\Rc$ denote the respective mixed terms. The claim $C_{n,p}$ will be proved if we show that $\Rc \geqslant \Lc$. Details follow below.

An easy rearrangement of the respective terms shows that
\begin{equation}
  \label{eq:5}
  \begin{split}
    \Lc &= \sum_{\substack{I\subset [n]\\ |I|=n-2}}\left(\nlsum_{i \in I}A_i\right)^{p-1}\left(\nlsum_{i\not\in I}A_i\right) + \cdots + \sum_{\substack{I\subset [n]\\|I|=2}}\left(\nlsum_{i\in I}A_i\right)^{p-1}\left(\nlsum_{i\not\in I}A_i\right)\\
    \Rc &= \sum_{\substack{I\subset [n]\\ |I|=n-1}}\left(\nlsum_{i \in I}A_i\right)^{p-1}\left(\nlsum_{i\not\in I}A_i\right) + \cdots + \sum_{k=1}^n A_k^{p-1}\left(\nlsum_{i\not=k}A_i\right)\\
  \end{split}
\end{equation}
Note that the main sums in $\Lc$ and $\Rc$ are only over even and odd sized subsets, respectively.  

The key to the proof is the following regrouping of~\eqref{eq:5}, which reveals the underlying inductive structure:
\begin{align*}
  &\Rc = \biggl(\sum_{i \in \nbk{n}{n}}A_i\biggr)^{p-1}A_n + 
      \biggl(\sum_{\substack{I\subset \nbk{n}{n}\\|I|=n-3 }}
        (\nlsum_{i\in I}A_i)^{p-1}\biggr)A_n+ \cdots
      + \biggl(\sum_{\substack{I\subset \nbk{n}{n}\\I=\{i\} }}A_i^{p-1}\biggr)A_n\; +\\
      &\biggl(\sum_{i \in \nbk{n}{n-1}}A_i\biggr)^{p-1}A_{n-1} + 
      \biggl(\sum_{\substack{I\subset \nbk{n}{n-1} \\ |I|=n-3\\ }}
      (\nlsum_{i\in I}A_i)^{p-1}\biggr)A_{n-1} + \cdots
      + \biggl(\sum_{\substack{ I\subset \nbk{n}{n-1} \\ I=\{i\} }}A_i^{p-1}\biggr)A_{n-1}\\
  &+\qquad\qquad\cdots\qquad\qquad +\qquad\qquad \cdots\qquad\qquad \qquad\cdots\qquad +\qquad\qquad \cdots\\
  &+\biggl(\sum_{i \in \nbk{n}{1}}A_i\biggr)^{p-1}A_{1} + 
      \biggl(\sum_{\substack{I\subset \nbk{n}{1}\\ |I|=n-3 }}
        (\nlsum_{i\in I}A_i)^{p-1}\biggr)A_{1} 
      + \cdots
      + \biggl(\sum_{\substack{I\subset \nbk{n}{1}\\ I=\{i\} }}A_i^{p-1}\biggr)A_{1},
\end{align*}
and  

\begin{align*}
  \Lc &= \biggl(\sum_{\substack{I \subset \nbk{n}{n}\\ |I|=n-2 }}A_i\biggr)^{p-1}A_{n} + 
      \biggl(\sum_{\substack{I\subset \nbk{n}{n}\\ |I|=n-4}}
      (\nlsum_{i\in I}A_i)^{p-1}\biggr)A_{n} + \cdots
      + \biggl(\sum_{\substack{I\subset \nbk{n}{n}\\|I|=2  }}A_i^{p-1}\biggr)A_{n}\\
      &+\qquad\qquad\cdots\quad\qquad +\qquad\qquad \cdots\qquad\qquad\qquad\cdots \qquad +\qquad\qquad \cdots\\
      &+ \biggl(\sum_{\substack{I \subset \nbk{n}{1}\\ |I|=n-2}}A_i\biggr)^{p-1}A_{1} + 
      \biggl(\sum_{\substack{I\subset \nbk{n}{1}\\ |I|=n-4}}
      (\nlsum_{i\in I}A_i)^{p-1}\biggr)A_{1} + \cdots
      + \biggl(\sum_{\substack{I\subset \nbk{n}{1}\\ |I|=2  }}A_i^{p-1}\biggr)A_{1}.
\end{align*}
The above expressions may be more succinctly written as
\begin{align*}
   \Rc &= \Bigl(\sum_{j=1:2:n-1}S_{j,\nbk{n}{n}}^{p-1}\Bigr)A_n 
      + \Bigl(\sum_{j=1:2:n-1}S_{j,\nbk{n}{n-1}}^{p-1}\Bigr)A_{n-1} 
      + \cdots 
      + \Bigl(\sum_{j=1:2:n-1}S_{j,\nbk{n}{1}}^{p-1}\Bigr)A_1\\
   \Lc &=\Bigl(\sum_{j=2:2:n-2}S_{j,\nbk{n}{n}}^{p-1}\Bigr)A_n 
      + \Bigl(\sum_{j=2:2:n-2}S_{j,\nbk{n}{n-1}}^{p-1}\Bigr)A_{n-1} 
      + \cdots 
      + \Bigl(\sum_{j=2:2:n-2}S_{j,\nbk{n}{1}}^{p-1}\Bigr)A_1.
   \end{align*}
For each pair of corresponding terms between $\Rc$ and $\Lc$, we can apply the statement $C_{n-1,p-1}$ because each set $\nbk{n}{k}$ is of size $n-1$. So we conclude that $\Rc \geqslant \Lc$.  

If $n$ is odd, the only difference is in the indices of the summations, which now run over $j=1\!:\!2\!:\!{n\!-\!2}$ for $\Lc$ and $j=2\!:\!2\!:\!n\!-\!1$ for $\Rc$. We conclude again that $\Rc \geqslant \Lc$, finishing the proof.
\end{proof}

\begin{corr}
  \label{cor.wolf}
  Conjecture~\ref{conj:one} is true.
\end{corr}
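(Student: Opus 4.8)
The plan is to deduce the corollary from Theorem~\ref{thm:main} in exactly the way Corollary~\ref{corr:one} was deduced from Theorem~\ref{thm:one}, only now for general $n$ and with the tensor power matched to the matrix size. Say the $A_i$ are $m\times m$, and take $p=m$ in Theorem~\ref{thm:main}. Choose $G=\mathfrak{S}_m$ and $\chi=\text{sgn}$, so that $\dxg=\det$, and let $Z:=Z_{G,\chi}$ be the matrix furnished by~\eqref{eq:8}, i.e. $\det(X)=Z^*(\otimes^mX)Z$ for every $m\times m$ matrix $X$.

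Next I would apply the congruence $X\mapsto Z^*XZ$ to both sides of the operator inequality~\eqref{eq:6} with $p=m$; since congruence preserves the L\"owner order, it remains to identify the image of each $S_{k,[n]}^m$ under this map. As $X\mapsto Z^*XZ$ is linear and, by~\eqref{eq:8}, sends $\bigl(\sum_{i\in I}A_i\bigr)^m=\otimes^m\bigl(\sum_{i\in I}A_i\bigr)$ to $\det\bigl(\sum_{i\in I}A_i\bigr)$, we obtain
\[
  Z^*\,S_{k,[n]}^m\,Z=\sum_{\substack{I\subseteq[n],\,|I|=k}}\det\Bigl(\sum_{i\in I}A_i\Bigr)=s_k .
\]
Now both sides have collapsed to scalars, and on $1\times1$ Hermitian matrices the L\"owner order is just the usual order on $\reals$. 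Hence conjugating~\eqref{eq:6} by $Z$ turns it into $s_n+s_{n-2}+\cdots\geqslant s_{n-1}+s_{n-3}+\cdots$, which is precisely inequality~\eqref{eq:9}.

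I do not expect any real obstacle here: all the difficulty resides in Theorem~\ref{thm:main}, whose double induction already does the work, and the passage to determinants is the standard multilinear-algebra device recalled around~\eqref{eq:8}. The only points to check are bookkeeping: that the parity pattern of the alternating sum survives the conjugation (it does, because the congruence acts term-by-term and $S_{k,[n]}^m\mapsto s_k$ for every $k$), and that the choice $p=m$ is admissible (it is, since Theorem~\ref{thm:main} holds for all $p\in\mathbb N$). I would also remark that replacing $(G,\chi)$ by an arbitrary subgroup of $\mathfrak{S}_m$ together with an irreducible character yields, by the identical argument, the $G$-immanant version of~\eqref{eq:9} with $\det$ replaced by $\dxg$.
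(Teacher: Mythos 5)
Your argument is correct and is essentially identical to the paper's: both set $p=m$ in Theorem~\ref{thm:main}, use the congruence representation of the determinant as a compression of $\otimes^m X$, and note that congruence preserves the L\"owner order so that each $S_{k,[n]}^m$ maps to $s_k$. Your closing remark about general $(G,\chi)$ is exactly how the paper obtains Corollary~\ref{cor.dxg}.
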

\begin{proof}
  Recall that for an $m\times m$ matrix $A$, $\det(A) = \wedge^m A$, where $\wedge$ denotes the usual (Grassmann) exterior product. Moreover, there exists a matrix $Z$ such that $\wedge^m A = Z^*(A^{\kron m})Z$. Since congruence preserves L\"owner order, setting $p=m$ in~(\ref{eq:6}) and transforming with $Z$, we immediately obtain inequality~(\ref{eq:9}).
\end{proof}

\noindent Using the argument of Corollary~\ref{cor.wolf} along with~\eqref{eq:8}, we obtain a more general result.
\begin{corr}
  \label{cor.dxg}
  Conjecture~\ref{conj:one} is true even when determinants are replaced by $G$-immanants.
\end{corr}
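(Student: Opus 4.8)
The plan is to mimic the argument of Corollary~\ref{cor.wolf} verbatim, replacing the exterior-power representation of the determinant by the representation~\eqref{eq:8} of the $G$-immanant. Concretely: fix a subgroup $G\leqslant\mathfrak S_m$ and an irreducible character $\chi$ of $G$, and recall that by~\eqref{eq:8} there exists a matrix $Z_{G,\chi}$ with $\dxg(X)=Z_{G,\chi}^*(\otimes^m X)Z_{G,\chi}$ for every $m\times m$ matrix $X$. I would then invoke Theorem~\ref{thm:main} with $p=m$, which gives the operator inequality $S_{n,[n]}^m+S_{n-2,[n]}^m+\cdots\geqslant S_{n-1,[n]}^m+S_{n-3,[n]}^m+\cdots$ among tensor sums of the $A_i$'s.

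The next step is to apply the congruence $Y\mapsto Z_{G,\chi}^*YZ_{G,\chi}$ to both sides of that operator inequality. Since congruence preserves the L\"owner order (if $Y\geqslant 0$ then $Z^*YZ\geqslant 0$), the inequality is preserved; and since congruence is linear, it distributes over the sums $S_{k,[n]}^m=\sum_{|I|=k}(\sum_{i\in I}A_i)^{\kron m}$, turning each summand $(\sum_{i\in I}A_i)^{\kron m}$ into $Z_{G,\chi}^*(\otimes^m(\sum_{i\in I}A_i))Z_{G,\chi}=\dxg(\sum_{i\in I}A_i)$. Thus $Z_{G,\chi}^*S_{k,[n]}^m Z_{G,\chi}=\sum_{1\leqslant i_1<\cdots<i_k\leqslant n}\dxg(A_{i_1}+\cdots+A_{i_k})$, which is exactly the $G$-immanant analogue $s_k^{G,\chi}$ of the polynomial $s_k$ from~\eqref{eq:4}. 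Reading off the transformed inequality then yields $s_n^{G,\chi}+s_{n-2}^{G,\chi}+\cdots\geqslant s_{n-1}^{G,\chi}+s_{n-3}^{G,\chi}+\cdots$, which is precisely inequality~\eqref{eq:9} with $\det$ replaced by $\dxg$.

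There is no real obstacle here: the statement is a formal corollary, and every ingredient (the representation~\eqref{eq:8}, the order-preservation of congruence, the linearity needed to push the congruence inside $S_{k,[n]}^m$) has already been used in the proofs of Corollary~\ref{corr:one} and Corollary~\ref{cor.wolf}. The only point deserving a sentence of care is that $\dxg(A_i)\geqslant 0$ and all the intermediate operators are genuinely positive definite, so that the L\"owner-order statement is meaningful; but this is immediate from Prop.~\ref{prop.basic}(i) applied to $\otimes^m(\sum_{i\in I}A_i)$ together with the fact that $\dxg$ of a positive definite matrix is positive (as $\dxg(X)=Z_{G,\chi}^*(\otimes^m X)Z_{G,\chi}$ with $\otimes^m X\geqslant0$). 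Hence I would write the proof in one or two lines, exactly parallel to Corollary~\ref{cor.wolf}, simply substituting $Z_{G,\chi}$ for $Z$ and $\dxg$ for $\det$.
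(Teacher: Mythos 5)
Your proposal is correct and is exactly the argument the paper intends: the paper's one-line justification is precisely "use the argument of Corollary~\ref{cor.wolf} with the representation~\eqref{eq:8} in place of the exterior-power representation," which is what you carry out. No issues.
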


We note in passing that even more is true: combining Theorem~\ref{thm:main} with the proof technique of~\citep{linSra14} we can obtain a block-matrix version of inequality~(\ref{eq:6}). Specifically, for $1 \le i \le m$ let $\bm{A}_i=[(A_i)_{pq}]_{p,q=1}^m \geqslant 0$ be positive definite block matrices comprised of $d\times d$ complex matrices $(A_i)_{pq}$. Define $\dxg(\bm{A}) := [\dxg\bigl(A_{pq}\bigr)]_{p,q=1}^m$ for a block matrix $\bm A$. Then, Corollary~\ref{cor.dxg} holds in its ``completely positive'' incarnation applied to a collection of block matrices $\bm{A}_1,\ldots,\bm{A}_n$. We leave the details as an exercise for the interested reader.

\section{From Popoviciu to Hlawka}
\label{sec:pop}
In this section we explore the connection of Popoviciu type inequalities alluded to in the introduction. In particular, we follow the proof technique of Theorem~\ref{thm:main} to establish several Popoviciu type inequalities, one of which recovers the multivariable $G$-immanant ``superadditivity'' inequality of~\citep[Theorem~4.1]{fzhang14} as a special case.

To simplify notation, we will frequently drop subscripts on summations; hence $\sum$ is understood to mean  $\sum\nolimits_{i=1}^n$ or $\sum\nolimits_{k=1}^n$, the choice being clear from context.

For a convex function $f:\mathbb R\to\mathbb R$ and scalars $x_1,\ldots,x_k$  Jensen's inequality says that
 \begin{equation}
   \label{eq:convex}
   {f(x_1)+\cdots+f(x_k)}\geqslant kf\bigl(\dfrac{x_1+\cdots+x_k}k\bigr).
 \end{equation}
After Jensen's inequality, Popoviciu's inequality may be considered as the next-to-simplest inequality for convex functions. We restate it here.
\begin{prop}
  \label{prop:pop1}
  If  $f$ is a convex function on a real interval $I$ and $x_1,x_2,x_3 \in I$, then
  \begin{equation}
    \label{eq:pop1}
 f(x_1)+f(x_2)+f(x_3)+3f\bigl(\tfrac{x_1+x_2+x_3}{3}\bigr)\geqslant
2\bigl( f\bigl(\tfrac{x_1+x_2}{2}\bigr) +f\bigl(\tfrac{x_1+x_3}{2}\bigr) +f\bigl(\tfrac{x_2+x_3}{2}\bigr)\bigr) 
  \end{equation}  
\end{prop}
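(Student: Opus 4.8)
The statement is classical, so the plan is to give a short proof that reduces it to two uses of ordinary convexity, using a reflection symmetry to halve the case analysis. First I would assume without loss of generality that $x_1 \leqslant x_2 \leqslant x_3$ and write $m := \tfrac{x_1+x_2+x_3}{3}$. Note that \eqref{eq:pop1} is unchanged if we replace $f$ by the (still convex) function $t \mapsto f(-t)$ and $(x_1,x_2,x_3)$ by $(-x_3,-x_2,-x_1)$, while this substitution sends $m$ to $-m$ and turns the hypothesis $x_2 \leqslant m$ into $x_2 \geqslant m$. Hence it suffices to handle the single case $x_1 + x_3 \geqslant 2x_2$ (equivalently $x_2 \leqslant m$); the other case then follows by applying the proved case to $t\mapsto f(-t)$. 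If $m = x_3$ then $x_1 = x_2 = x_3$ and \eqref{eq:pop1} is trivial, so I would also assume $m < x_3$.

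In the case $x_2 \leqslant m$, elementary manipulations of $x_1 \leqslant x_2 \leqslant x_3$ together with $x_1 + x_3 \geqslant 2x_2$ yield the chain
\[
  x_1 \;\leqslant\; \tfrac{x_1+x_2}{2} \;\leqslant\; x_2 \;\leqslant\; m \;\leqslant\; \tfrac{x_1+x_3}{2} \;\leqslant\; \tfrac{x_2+x_3}{2} \;\leqslant\; x_3 ,
\]
where in particular $\tfrac{x_1+x_3}{2}\geqslant m$ is exactly $x_1+x_3 \geqslant 2x_2$ and $\tfrac{x_2+x_3}{2}\geqslant m$ is exactly $x_2+x_3 \geqslant 2x_1$; all six midpoints therefore lie in $[x_1,x_3] \subseteq I$. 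Midpoint convexity disposes of the $\tfrac{x_1+x_2}{2}$ term, $f(x_1)+f(x_2) \geqslant 2f\bigl(\tfrac{x_1+x_2}{2}\bigr)$, so \eqref{eq:pop1} reduces to the four-point inequality $f(x_3) + 3f(m) \geqslant 2f\bigl(\tfrac{x_1+x_3}{2}\bigr) + 2f\bigl(\tfrac{x_2+x_3}{2}\bigr)$.

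To finish I would write the two remaining midpoints as convex combinations of the endpoints $m$ and $x_3$ of the interval on which they sit: $\tfrac{x_1+x_3}{2} = \alpha m + (1-\alpha)x_3$ and $\tfrac{x_2+x_3}{2} = \beta m + (1-\beta)x_3$ with $\alpha = \tfrac{x_3-x_1}{2(x_3-m)}$ and $\beta = \tfrac{x_3-x_2}{2(x_3-m)}$, both in $[0,1]$. Convexity gives $2f\bigl(\tfrac{x_1+x_3}{2}\bigr) + 2f\bigl(\tfrac{x_2+x_3}{2}\bigr) \leqslant 2(\alpha+\beta)f(m) + \bigl(4-2(\alpha+\beta)\bigr)f(x_3)$, and the crux is the identity $\alpha + \beta = \tfrac32$: indeed $2x_3 - x_1 - x_2 = 3x_3 - (x_1+x_2+x_3) = 3(x_3-m)$, so $\alpha+\beta = \tfrac{2x_3-x_1-x_2}{2(x_3-m)} = \tfrac32$. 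Substituting, the right-hand side collapses to exactly $3f(m) + f(x_3)$, which proves the four-point inequality and hence \eqref{eq:pop1}. I do not expect a genuine obstacle here; the only things to get right are spotting the reflection that isolates a single case and arranging the bookkeeping so that the weights on $m$ sum to precisely $\tfrac32$.
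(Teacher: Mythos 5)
Your argument is correct. Note, however, that the paper does not prove Proposition~\ref{prop:pop1} at all: it simply restates Popoviciu's classical inequality as known background (it is only used as motivation for the operator analogues), so there is no in-paper proof to compare against. Your write-up supplies a genuine, self-contained proof, and every step checks out: the reflection $f(t)\mapsto f(-t)$, $(x_1,x_2,x_3)\mapsto(-x_3,-x_2,-x_1)$ does preserve the inequality and swaps the two cases $x_2\leqslant m$ and $x_2\geqslant m$; in the case $x_2\leqslant m$ the ordering chain is verified by exactly the equivalences you state; the weights $\alpha=\tfrac{x_3-x_1}{2(x_3-m)}$ and $\beta=\tfrac{x_3-x_2}{2(x_3-m)}$ do lie in $[0,1]$ precisely because $\tfrac{x_1+x_3}{2}\geqslant m$ and $\tfrac{x_2+x_3}{2}\geqslant m$; and the identity $\alpha+\beta=\tfrac32$ collapses the convexity bound to exactly $3f(m)+f(x_3)$. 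The degenerate case $m=x_3$ forcing $x_1=x_2=x_3$ is also handled. This is essentially the standard "SOS-free" proof of Popoviciu via three applications of Jensen on nested subintervals; it is elementary, uses only convexity on two points at a time, and would be a reasonable addition if the paper wished to be self-contained.
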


Formally, inequality~\eqref{eq:pop1} resembles Hlawka's inequality (up to scaling factors, which are actually crucial). This resemblance motivates us to examine if some known generalizations to Popoviciu's inequality for scalars, also extend to positive operators. 

We begin with the following generalization of~\eqref{eq:pop1} given by~\citet{vasc}.
\begin{prop}
  \label{prop:pop2}
  Let $f$ be convex on a real interval $I$, and $x_1,x_2, ...,x_n \in I$. Then,    \begin{equation}
    \label{eq:pop2}
    f(x_1) + \cdots + f(x_n) + \frac{n}{n-2}f\Bigl(\frac{x_1+\cdots+x_n}{n}\Bigr)
    \geqslant
    \frac{2}{n-2}\sum_{i<j}f\Bigl(\frac{x_i+x_j}{2}\Bigr). 
  \end{equation}
\end{prop}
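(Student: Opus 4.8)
The plan is to prove Proposition~\ref{prop:pop2} by reduction to the Jensen-type inequality~\eqref{eq:convex} combined with a clean double-counting identity, exactly mirroring the spirit of the tensor computations used for Theorem~\ref{thm:main} but now in the scalar convex setting. The first step is to observe that each pairwise average $\tfrac{x_i+x_j}{2}$ is itself a convex combination of the two ``global'' type quantities we already understand: namely, for suitable weights, $\tfrac{x_i+x_j}{2}$ lies on the segment between $x_\ell$ for various $\ell$ and the full mean $\bar x := \tfrac{x_1+\cdots+x_n}{n}$. Concretely, I would write $\tfrac{x_i+x_j}{2} = \lambda \cdot \tfrac{1}{n-1}\sum_{\ell\neq k}x_\ell$-type expressions, or more directly use the representation $\tfrac{x_i+x_j}{2} = \alpha\,\bar x + (1-\alpha)\cdot(\text{average of the remaining }x_\ell)$ and apply convexity of $f$ to each such decomposition. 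Summing over all pairs $i<j$ and carefully tracking how often each $x_\ell$ and each $\bar x$ appears gives the binomial coefficients $\tfrac{n}{n-2}$ and $\tfrac{2}{n-2}$.

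More precisely, the key step I expect to carry the proof is the identity, valid for each pair $\{i,j\}$,
\begin{equation*}
  \frac{x_i+x_j}{2} = \frac{1}{2}\cdot\frac{n-2}{n-1}\Bigl(\frac{\sum_{\ell\neq i}x_\ell}{n-1}\Bigr)^{\!?}\ \cdots
\end{equation*}
— rather than guess the exact convex combination here, the honest approach is: fix the pair $\{i,j\}$ and note $x_i+x_j = n\bar x - \sum_{\ell\notin\{i,j\}}x_\ell$, so $\tfrac{x_i+x_j}{2}$ is an affine (not convex) combination. To get a genuine convex combination I instead use the substitution that makes Vasi\'c--Stankovi\'c's original proof work: write $\tfrac{x_i+x_j}{2}$ as a convex combination of $\bar x$ and the partial means $m_{S} := \tfrac{1}{n-2}\sum_{\ell\in S}x_\ell$ over the $(n-2)$-element complements $S=[n]\setminus\{i,j\}$. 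Applying convexity, $f\bigl(\tfrac{x_i+x_j}{2}\bigr)\le \alpha f(\bar x) + (1-\alpha) f(m_S)$ for the right $\alpha$, then bound $f(m_S)\le \tfrac{1}{n-2}\sum_{\ell\in S}f(x_\ell)$ by Jensen~\eqref{eq:convex}, sum over all $\binom n2$ pairs, and finally collect coefficients.

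The main obstacle — and the part deserving the most care — is getting the weight $\alpha$ and the subsequent combinatorial bookkeeping exactly right so that the coefficient of $f(\bar x)$ comes out to $\tfrac{n}{n-2}$ and each $f(x_\ell)$ accumulates coefficient exactly $1$ after dividing through. One has to verify that $\alpha$ is the same for every pair (it is, by symmetry, depending only on $n$), that $\alpha\in[0,1]$ so convexity legitimately applies (this forces the hypothesis $n\ge 3$, and the $n=3$ case should recover Proposition~\ref{prop:pop1}), and that $\sum_{i<j}(1-\alpha)\cdot\tfrac{1}{n-2}\cdot(\text{number of pairs whose complement contains }\ell) = 1$ for each $\ell$. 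Since each $\ell$ lies in the complement of $\binom{n-1}{2}$ pairs, this reduces to checking a single rational identity in $n$. I would present this coefficient verification as the one nontrivial computation, and remark that, unlike the operator results of the previous sections, this argument is purely scalar and does not pass through the tensor representation~\eqref{eq:8} — though it sets up the notation for the operator Popoviciu inequalities to follow.
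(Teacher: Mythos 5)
The paper does not actually prove Proposition~\ref{prop:pop2}: it is quoted as a known scalar result with a citation to Cirtoaje, and only its operator analogue (Theorem~\ref{thm:pop_hl2}) is proved. So your argument has to stand on its own, and it does not: the step you designate as the key one is impossible. Writing $\bar x=\tfrac1n\sum_\ell x_\ell$ and $m_{ij}=\tfrac{1}{n-2}\sum_{\ell\notin\{i,j\}}x_\ell$, the identity $x_i+x_j=n\bar x-(n-2)m_{ij}$ gives $\tfrac{x_i+x_j}{2}=\tfrac n2\,\bar x-\tfrac{n-2}{2}\,m_{ij}$; the weight on $\bar x$ is $n/2>1$ and the weight on $m_{ij}$ is negative. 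You observe this yourself (``affine, not convex'') and then propose exactly that decomposition as the fix: there is no $\alpha\in[0,1]$ with $\tfrac{x_i+x_j}{2}=\alpha\bar x+(1-\alpha)m_{ij}$ identically in the $x_\ell$. Worse, no bound of this general shape exists: demanding $\tfrac{x_i+x_j}{2}=\alpha\bar x+\sum_\ell\beta_\ell x_\ell$ as an identity with $\alpha,\beta_\ell\geqslant 0$ forces, from the coefficient of $x_\ell$ for $\ell\notin\{i,j\}$, that $\beta_\ell=-\alpha/n$, hence $\alpha=0$ and you are left with only the trivial estimate $f\bigl(\tfrac{x_i+x_j}{2}\bigr)\leqslant\tfrac12 f(x_i)+\tfrac12 f(x_j)$. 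Summing that over pairs yields $\tfrac{2}{n-2}\sum_{i<j}f\bigl(\tfrac{x_i+x_j}{2}\bigr)\leqslant\tfrac{n-1}{n-2}\sum_i f(x_i)$, and closing the remaining gap would require $\tfrac1n\sum_i f(x_i)\leqslant f(\bar x)$ --- the reverse of Jensen.

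The underlying issue is that Popoviciu-type inequalities are not consequences of term-by-term convexity upper bounds: $f(\bar x)$ appears with a positive coefficient on the majorizing side, so any proof must exploit genuine cancellation between the mean term and the pairwise terms, not bound each pairwise term separately by a fixed convex combination. The standard repairs are either (i) a majorization/Karamata-type argument after ordering the $x_i$ relative to $\bar x$, or (ii) reduction to the extremal convex functions $f(x)=|x-t|$ (equivalently $(x-t)_+$), for which \eqref{eq:pop2} becomes an elementary piecewise-linear counting verification --- essentially Grinberg's proof of the Cirtoaje--Zhao generalization referenced in Section~\ref{sec:pop}. Either route would replace the missing step; the convex-combination-plus-Jensen bookkeeping you outline cannot be completed.
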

Comparing the case $n=3$~\eqref{eq:pop1} with the Hlawka inequality~\eqref{eq:10}, it is clear that $kf\Bigl(\dfrac{x_1+\cdots+x_k}{k}\Bigr)$ should correspond to $\Vert{a_1+\cdots+a_k}\Vert$. In terms of tensor sums, after multiplying with $(n-2)$, we are led to conjecture~\eqref{eq:pop_hl2}, which turns out to be true.

\begin{theorem}
  \label{thm:pop_hl2}
  Let $A_1,\dots,A_n$ be positive definite operators. Then for each integer $p \geqslant 1$,
  \begin{equation}
    \label{eq:pop_hl2}
   (n-2) \sum A_i^p+ \left  (\sum A_i\right)^p \geqslant \sum_{i< j} (A_i+A_j)^p.
  \end{equation}
\end{theorem}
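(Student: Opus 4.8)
The plan is to establish \eqref{eq:pop_hl2} by a single induction on $p$, following the proof technique of Theorem~\ref{thm:main} but with a much lighter bookkeeping burden: here no auxiliary induction on $n$ will be needed. For the base case $p=1$, both sides of \eqref{eq:pop_hl2} collapse to $(n-1)\sum_i A_i$, since each index $i$ occurs in exactly $n-1$ of the pairs $\{i,j\}$; thus \eqref{eq:pop_hl2} holds with equality. (One checks equally easily that $p=2$ also yields equality, a useful sanity check before attempting the inductive step.)

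For the inductive step, assume \eqref{eq:pop_hl2} holds for some $p\geqslant 1$ and put $T=\sum_k A_k\geqslant 0$. Multiplying the hypothesis on the right by $T$ preserves the L\"owner order by Prop.~\ref{prop.basic}(ii). Then I expand every product by distributivity (Prop.~\ref{prop.basic}(iii)): inside $A_i^pT$ write $T=A_i+\sum_{k\neq i}A_k$, inside $(A_i+A_j)^pT$ write $T=(A_i+A_j)+\sum_{k\neq i,j}A_k$, and use $\bigl(\sum_i A_i\bigr)^pT=\bigl(\sum_i A_i\bigr)^{p+1}$. The ``pure power'' pieces reassemble into precisely the two sides of the $p+1$ case, and the leftover mixed terms are
\[
  \Rc = \sum_{i<j}(A_i+A_j)^p\Bigl(\nlsum_{k\neq i,j}A_k\Bigr),\qquad
  \Lc = (n-2)\sum_i A_i^p\Bigl(\nlsum_{k\neq i}A_k\Bigr),
\]
so that $(n-2)\sum_i A_i^{p+1}+\bigl(\sum_i A_i\bigr)^{p+1}-\sum_{i<j}(A_i+A_j)^{p+1}\geqslant \Rc-\Lc$. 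Hence it suffices to prove $\Rc\geqslant\Lc$.

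The last step is where the argument really lives, and it is short. Regroup both $\Rc$ and $\Lc$ by their ``outer'' tensor factor: $\Rc=\sum_k\bigl(\sum_{\{i,j\}\subseteq \nbk{n}{k}}(A_i+A_j)^p\bigr)A_k$ and $\Lc=(n-2)\sum_k\bigl(\sum_{i\in \nbk{n}{k}}A_i^p\bigr)A_k$. For each fixed $k$ the set $\nbk{n}{k}$ has $n-1$ elements, and each such element lies in exactly $n-2$ of its $2$-subsets, so superadditivity Prop.~\ref{prop.basic}(iv) gives, term by term, $\sum_{\{i,j\}\subseteq\nbk{n}{k}}(A_i+A_j)^p\geqslant \sum_{\{i,j\}\subseteq\nbk{n}{k}}(A_i^p+A_j^p)=(n-2)\sum_{i\in\nbk{n}{k}}A_i^p$. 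Tensoring on the right by $A_k\geqslant 0$ and summing over $k$ (Prop.~\ref{prop.basic}(i)--(iii)) yields $\Rc-\Lc\geqslant 0$, completing the induction. The only genuine obstacle is organizing the expansion of the mixed terms correctly; the pleasant surprise, in contrast with Theorem~\ref{thm:main}, is that plain superadditivity applied within each $\nbk{n}{k}$ already closes the gap, so no nested induction on $n$ is required.
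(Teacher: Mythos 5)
Your proof is correct and is essentially the paper's own argument: induction on $p$, tensoring the hypothesis on the right by $\sum_k A_k$, and then disposing of the mixed terms via superadditivity (Prop.~\ref{prop.basic}(iv)) plus the same multiplicity count (each $A_i^pA_k$ appearing $n-2$ times). The only difference is cosmetic---you group the leftover terms by the outer factor $A_k$ in the style of Theorem~\ref{thm:main}, whereas the paper writes the identical cancellation as a telescoping chain fixing the index $i$.
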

\begin{proof}
  The proof is similar to the one of Theorem~\ref{thm:one}. We proceed by induction on $p$. For $p=1$, both sides of~\eqref{eq:pop_hl2} are equal to $(n-1) \sum A_i$; for $p=2$ we again have equality, since both sides of~\eqref{eq:pop_hl2} are equal to
  \begin{align*}
    (n-1)\sum A_i^2+ \sum_{i< j} (A_iA_j+A_jA_i).
\end{align*}
Assume for the inductive step that~\eqref{eq:pop_hl2} holds for some integer $p \geqslant 2$. Then for $p+1$, 
\begin{align*}
  &\bigl(\sum A_i\bigr)^{p+1} + (n-2) \sum A_i^{p+1}\\
  &=\underbrace{\left  (\sum A_i\right)^p\Bigl(\sum A_k^{\phantom p}\Bigr)+(n-2) \sum A_i^{p}\sum A_k^{\phantom p}} -(n-2) \sum_{i< j} (A_i^pA_j^{\phantom p}+A_j^pA_i^{\phantom p}) \\
  &\geqslant \qquad\qquad\sum_{i< j} (A_i+A_j)^p\sum A_k^{\phantom p}\ \quad\qquad  \qquad -(n-2) \sum_{i< j} (A_i^pA_j^{\phantom p}+A_j^pA_i^{\phantom p}) \\
  &=   \sum_{i< j} (A_i+A_j)^{p+1}+\sum_{i< j} (A_i+A_j)^p\sum_{k\not\in\{ i,j\}} A_k^{\phantom p} -(n-2) \sum_{i< j} (A_i^pA_j^{\phantom p}+A_j^pA_i^{\phantom p}) \\
  &\geqslant  \sum_{i< j} (A_i+A_j)^{p+1}+\sum_{i< j} (A_i^p+A_j^p)\sum_{k\not\in\{ i,j\}} A_k^{\phantom p} -(n-2) \sum_{i< j} (A_i^pA_j^{\phantom p}+A_j^pA_i^{\phantom p}) \\
  &=   \sum_{i< j} (A_i+A_j)^{p+1}.
\end{align*}
The first inequality follows from the induction hypothesis applied to the underbraced term, while the second inequality follows from  superadditivity~\ref{prop.basic}(iv). The final equality is easy to verify: Fix $i=1$, then the second term yields for each $j=2,\dots,n$ the product of $A_1^p$ with $(n-2)$ of the $A_k^{\phantom p}$'s ($k\ne 1$), so for each $k\ne 1$ the product $A_1^pA_k^{\phantom p}$ occurs $(n-2)$ times, and so it does also in the negative term. By symmetry, the same holds for all $i$.
\end{proof}

\begin{corr}
  \label{cor:pop1}
  Let $A_1,\ldots,A_n$ be positive definite, and let $\dxg$ be as in~(\ref{eq:2}). Then,
  \begin{equation*}
    (n-2)\sum\dxg(A_i) + \dxg(\sum A_i) \geqslant \sum_{i< j} \dxg(A_i+A_j).
  \end{equation*}
\end{corr}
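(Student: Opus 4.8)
The plan is to mimic exactly the argument used for Corollary~\ref{corr:one}, replacing the three-operator Hlawka inequality by the $n$-operator Popoviciu-type inequality of Theorem~\ref{thm:pop_hl2}. The two ingredients we need are: (1) the tensor representation~\eqref{eq:8}, namely that there is a fixed matrix $Z_{G,\chi}$ with $\dxg(X) = Z_{G,\chi}^*(\otimes^m X)Z_{G,\chi}$ for every $m\times m$ matrix $X$; and (2) the fact that congruence by a fixed matrix preserves the L\"owner order, i.e. if $P \geqslant Q$ then $Z^*PZ \geqslant Z^*QZ$.

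First I would invoke Theorem~\ref{thm:pop_hl2} with the exponent $p = m$ (the size of the matrices), which gives the operator inequality
\begin{equation*}
  (n-2)\sum A_i^{\kron m} + \Bigl(\sum A_i\Bigr)^{\kron m} \;\geqslant\; \sum_{i<j}(A_i+A_j)^{\kron m}
\end{equation*}
in the L\"owner order on the $m^m$-dimensional tensor space. Next I would apply the congruence $P \mapsto Z_{G,\chi}^* P Z_{G,\chi}$ to both sides; by ingredient (2) the inequality is preserved, and by ingredient (1) each term $Z_{G,\chi}^*(\otimes^m X)Z_{G,\chi}$ collapses to the scalar $\dxg(X)$. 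The congruence is linear, so it distributes over the sums and through the scalar factor $(n-2)$, yielding precisely
\begin{equation*}
  (n-2)\sum \dxg(A_i) + \dxg\Bigl(\sum A_i\Bigr) \;\geqslant\; \sum_{i<j}\dxg(A_i+A_j),
\end{equation*}
which is the claim. (For $\dxg = \det$ one may alternatively use $\wedge^m A = Z^*(A^{\kron m})Z$ as in Corollary~\ref{cor.wolf}, but the general statement needs~\eqref{eq:8}.)

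There is essentially no obstacle here: the entire content of the corollary is already packaged in Theorem~\ref{thm:pop_hl2}, and the passage from the tensor inequality to the $G$-immanant inequality is the same boilerplate congruence argument used three times already in the paper. The one point worth a half-sentence of care is that $A_i + A_j$ and $\sum A_i$ are themselves positive definite $m\times m$ matrices, so representation~\eqref{eq:8} applies to them verbatim; and that $Z_{G,\chi}$ does not depend on the argument matrix, which is exactly why the single fixed congruence simultaneously handles all the terms. So the proof is a two-line invocation of Theorem~\ref{thm:pop_hl2} and~\eqref{eq:8}.
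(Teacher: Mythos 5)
Your proof is correct and is exactly the argument the paper intends (the paper omits the proof, but the pattern is the one used for Corollary~\ref{corr:one} and Corollary~\ref{cor.wolf}): apply Theorem~\ref{thm:pop_hl2} with $p=m$ and then the congruence $P\mapsto Z_{G,\chi}^*PZ_{G,\chi}$, which preserves the L\"owner order and collapses each tensor power to the corresponding $G$-immanant via~\eqref{eq:8}. Nothing to add.
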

Corollary~\ref{cor:pop1} combined with the superadditivity inequality Prop.~\ref{prop.basic}(iv) for the appropriate pairs of indices implies the following result of~\citet{fzhang14}.\begin{corr}[\protect{\citep[Theorem~4.1]{fzhang14}}]
  Let $A_1,\ldots,A_n$ and $\dxg$ be as in Corollary~\ref{cor:pop1}. Then,
  \begin{equation*}
    \dxg(A_1+\ldots+A_n) \geqslant \sum_{i\neq j} \dxg(A_i+A_j) - (n-2)\dxg(A_i)\qquad\text{for each } i=1,\ldots,n.
  \end{equation*}
\end{corr}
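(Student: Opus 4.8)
The plan is to deduce the inequality directly from Corollary~\ref{cor:pop1} by rearranging it and then absorbing the ``extra'' terms with superadditivity of $\dxg$. The only auxiliary fact needed is the scalar superadditivity bound $\dxg(A+B)\geqslant\dxg(A)+\dxg(B)$ for positive definite $A,B$, which follows by combining representation~\eqref{eq:8} with Prop.~\ref{prop.basic}(iv) at $p=m$ (recall $\dxg$ is scalar valued, so the L\"owner order here is just the order on $\reals$).

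First I would rewrite Corollary~\ref{cor:pop1} so that $\dxg(\sum_k A_k)$ is isolated:
\begin{equation*}
  \dxg\Bigl(\sum_{k}A_k\Bigr)\ \geqslant\ \sum_{k<l}\dxg(A_k+A_l)\ -\ (n-2)\sum_{k}\dxg(A_k).
\end{equation*}
Then, fixing an index $i\in\{1,\dots,n\}$, I would split the sum over pairs into those containing $i$ and those not containing $i$:
\begin{equation*}
  \sum_{k<l}\dxg(A_k+A_l)\ =\ \sum_{j\neq i}\dxg(A_i+A_j)\ +\ \sum_{\substack{k<l\\ k,l\neq i}}\dxg(A_k+A_l).
\end{equation*}

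Next I would apply superadditivity to the second sum: $\dxg(A_k+A_l)\geqslant\dxg(A_k)+\dxg(A_l)$ for every pair with $k,l\neq i$. Summing over all such pairs, and using that each fixed index $k\neq i$ occurs in exactly $n-2$ of the two-element subsets of $[n]\setminus\{i\}$, one gets $\sum_{k<l,\,k,l\neq i}\dxg(A_k+A_l)\geqslant(n-2)\sum_{k\neq i}\dxg(A_k)$. Substituting this into the rearranged corollary and using $\sum_{k}\dxg(A_k)-\sum_{k\neq i}\dxg(A_k)=\dxg(A_i)$ yields
\begin{equation*}
  \dxg\Bigl(\sum_k A_k\Bigr)\ \geqslant\ \sum_{j\neq i}\dxg(A_i+A_j)\ -\ (n-2)\dxg(A_i),
\end{equation*}
which is precisely the claim (the sum $\sum_{i\neq j}$ in the statement being read with $i$ held fixed). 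Since $i$ was arbitrary, this holds for each $i=1,\dots,n$.

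There is no genuine obstacle here; the single point requiring care is the combinatorial bookkeeping in the counting step, namely checking that in the family of two-element subsets of $[n]\setminus\{i\}$ each element lies in exactly $n-2$ subsets, so that telescoping the superadditivity bounds reproduces exactly the coefficient $(n-2)$ appearing in Corollary~\ref{cor:pop1}. The rest is a one-line rearrangement.
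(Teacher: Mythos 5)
Your proposal is correct and is exactly the argument the paper intends: the paper's one-line justification ("Corollary~\ref{cor:pop1} combined with the superadditivity inequality Prop.~\ref{prop.basic}(iv) for the appropriate pairs of indices") is precisely your splitting of the pair-sum into pairs containing $i$ and pairs avoiding $i$, followed by superadditivity on the latter and the count that each $k\neq i$ lies in $n-2$ two-element subsets of $[n]\setminus\{i\}$. You have simply filled in the details the paper leaves implicit.
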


Before stating the most general result in this direction, we mention an intermediate generalization of  Popoviciu's inequality, which we call \emph{Popoviciu-Cirtoaje-Zhao inequality}.\footnote{This scalar inequality was proposed by Yufei Zhao (username Billzhao) and Vasile Cirtoaje (username Vasc) on the website AoPS~\citep{vasc} and was soon proved by Darij Grinberg in the same thread.} It states the following:

\begin{prop}
  \label{prop:pop3} 
 If  $f$ is a convex function on a real interval $I$ and $x_1,x_2, ...,x_n \in I$, then for $2\leqslant m<n$,

  \begin{equation}
    \label{eq:pop3}
  \binom{n-2}{m-1}  \left( f(x_1)+\cdots+f(x_n)\right)  
  +n\binom{n-2}{m-2}f\Bigl(\dfrac{x_1+\cdots+x_n}{n}\Bigr) \\  
  \geqslant m\sum_{ i_1   <\cdots < i_m }  f\Bigl(\dfrac{x_{i_1}+\cdots+x_{i_m}}{m}\Bigr)  
    \end{equation} 
\end{prop}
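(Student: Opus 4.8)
The plan is to deduce the scalar inequality \eqref{eq:pop3} from Radu's inequality (Proposition~\ref{prop:radu}) applied on the real line with $\norm{\cdot}=|\cdot|$. The starting point is to record that the difference functional
\[
  L[f] := \binom{n-2}{m-1}\sum_i f(x_i) + n\binom{n-2}{m-2}f\Bigl(\tfrac{x_1+\cdots+x_n}{n}\Bigr) - m\!\!\sum_{i_1<\cdots<i_m}\!\! f\Bigl(\tfrac{x_{i_1}+\cdots+x_{i_m}}{m}\Bigr)
\]
is \emph{balanced}, i.e.\ it vanishes on affine functions. Testing $f\equiv1$ gives $n\binom{n-2}{m-1}+n\binom{n-2}{m-2}=n\binom{n-1}{m-1}$ on the left and $m\binom{n}{m}=n\binom{n-1}{m-1}$ on the right; testing $f(x)=x$ gives $\binom{n-1}{m-1}\sum_i x_i$ on both sides, using $\sum_{i_1<\cdots<i_m}(x_{i_1}+\cdots+x_{i_m})=\binom{n-1}{m-1}\sum_i x_i$. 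Thus $L$ annihilates the affine part of any $f$ and depends only on its genuinely convex part.

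Next I would use the integral representation of a convex function on $I$, namely $f(x)=\alpha+\beta x+\int_I (x-t)_+\,d\mu(t)$ with $\mu\geqslant0$. Since $L$ is linear and kills $\alpha+\beta x$, it suffices to verify $L[g_t]\geqslant0$ for every ramp $g_t(x)=(x-t)_+$. Translating by $y_i=x_i-t$ and using $(\lambda u)_+=\lambda u_+$ for $\lambda\geqslant0$, the inequality $L[g_t]\geqslant0$ reduces (after renaming $y_i$ to $x_i$, now arbitrary reals) to
\[
  \binom{n-2}{m-1}\sum_i (x_i)_+ + \binom{n-2}{m-2}\Bigl(\sum_i x_i\Bigr)_+ \;\geqslant\; \sum_{i_1<\cdots<i_m}\bigl(x_{i_1}+\cdots+x_{i_m}\bigr)_+.
\]
Writing $u_+=\tfrac12(u+|u|)$ and invoking the balancing once more — the linear halves cancel exactly by the same identity $\binom{n-2}{m-1}+\binom{n-2}{m-2}=\binom{n-1}{m-1}$ — this is in turn equivalent to
\[
  \binom{n-2}{m-1}\sum_i |x_i| + \binom{n-2}{m-2}\Bigl|\sum_i x_i\Bigr| \;\geqslant\; \sum_{i_1<\cdots<i_m}\bigl|x_{i_1}+\cdots+x_{i_m}\bigr|,
\]
which is precisely Proposition~\ref{prop:radu} with $k=m$, $H=\reals$, and $\norm{\cdot}=|\cdot|$; its hypotheses $n\geqslant3$ and $2\leqslant m\leqslant n$ hold because $2\leqslant m<n$.

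The conceptual crux — and presumably the reason Proposition~\ref{prop:radu} was recorded earlier — is this recognition that, once one strips off affine terms, the convex Popoviciu--Cirtoaje--Zhao inequality is nothing but Radu's norm inequality specialized to $\reals$. The only step that requires genuine care is the reduction to ramps: one must justify the integral representation (equivalently, that the extreme rays of the cone of convex functions modulo affine functions are the functions $(x-t)_+$) and track the affine cancellations cleanly. No new analytic estimate is needed beyond Proposition~\ref{prop:radu} itself; the remaining work is the binomial bookkeeping verifying that $L$ is balanced, which rests entirely on Pascal's rule.
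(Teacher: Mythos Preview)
The paper does not supply a proof of Proposition~\ref{prop:pop3}; it merely records the inequality and attributes it (via a footnote) to Cirtoaje, Zhao, and Grinberg on AoPS. So there is no ``paper's own proof'' to compare against, and your argument must be judged on its own merits.

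Your reduction is correct and rather elegant. The verification that $L$ annihilates constants and linear functions is accurate (both checks rest on Pascal's rule $\binom{n-2}{m-1}+\binom{n-2}{m-2}=\binom{n-1}{m-1}$ and the identity $m\binom{n}{m}=n\binom{n-1}{m-1}$). The passage from the ramp inequality for $(\cdot)_+$ to the absolute-value inequality via $u_+=\tfrac12(u+|u|)$ is clean, and the resulting statement is indeed Proposition~\ref{prop:radu} with $H=\reals$, $k=m$; its hypotheses are met since $2\leqslant m<n$ forces $n\geqslant3$.

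The only point that deserves a line of justification is the integral representation step. On an arbitrary interval $I$ a convex $f$ need not admit a global representation $f(x)=\alpha+\beta x+\int_I(x-t)_+\,d\mu(t)$ with $\mu$ finite; but you do not need that. Since only the finitely many evaluation points $x_1,\ldots,x_n$ and their averages enter $L[f]$, it suffices to restrict to a compact subinterval $[a,b]\subset I$ containing them all, on which the representation holds with $\mu$ finite (equivalently: the extreme rays of the cone of convex functions on $[a,b]$ modulo affine functions are the ramps $(x-t)_+$, $t\in[a,b]$). With that remark in place your argument is complete. It is worth noting that this route explains \emph{why} Proposition~\ref{prop:radu} appears in the paper at all: the scalar Popoviciu--Cirtoaje--Zhao inequality is, after stripping affine parts, exactly the one-dimensional instance of the Radulescu norm inequality.
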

The corresponding generalization of Theorem~\ref{thm:pop_hl2} is
\begin{theorem}
  \label{thm:pop_hl3}
  Let $A_1,\dots,A_n$ be positive definite operators. Then for each integer $p \geqslant 1$,
  \begin{equation}
    \label{eq:pop_hl3}
   \binom{n-2}{m-1} \sum A_i^p+\binom{n-2}{m-2} \left  (\sum A_i\right)^p 
   \geqslant \sum_{i_1<\cdots < i_m} (A_{i_1}+\cdots+A_{i_m})^p.
  \end{equation}
\end{theorem}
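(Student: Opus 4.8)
The plan is to run the single‑variable induction on $p$ that proves Theorem~\ref{thm:pop_hl2}, of which \eqref{eq:pop_hl3} is exactly the case $m=2$ (there $\binom{n-2}{m-1}=n-2$ and $\binom{n-2}{m-2}=1$). Unlike Theorem~\ref{thm:main}, I do not expect to need a second induction on $n$: the ``mixed terms'' thrown off by the inductive step can be absorbed by plain superadditivity (Prop.~\ref{prop.basic}(iv)) rather than by a lower‑$n$ instance of the inequality. Throughout I would abbreviate $\alpha:=\binom{n-2}{m-1}$, $\beta:=\binom{n-2}{m-2}$, and $A_I:=\sum_{i\in I}A_i$ for $I\subseteq[n]$, so that the goal reads $\alpha\sum A_i^p+\beta\bigl(\sum A_i\bigr)^p\geqslant\sum_{|I|=m}A_I^p$.

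For the base case $p=1$ the inequality is an \emph{equality}: Pascal's rule gives $\alpha+\beta=\binom{n-1}{m-1}$, so the left‑hand side is $\binom{n-1}{m-1}\sum A_i$, while on the right each index of $[n]$ lies in exactly $\binom{n-1}{m-1}$ of the $m$‑element subsets, so the right‑hand side equals the same operator. (As in the proof of Theorem~\ref{thm:pop_hl2} one may also record the harmless equality at $p=2$, but it is not logically needed.)

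For the inductive step, assume the claim at level $p$. Using distributivity (Prop.~\ref{prop.basic}(iii)) I would rewrite $\sum A_i^{p+1}=\bigl(\sum A_i^p\bigr)\bigl(\sum A_k\bigr)-\sum_{i\neq j}A_i^pA_j$, giving
\[
 \beta\Bigl(\sum A_i\Bigr)^{p+1}+\alpha\sum A_i^{p+1}
 =\Bigl[\beta\bigl(\sum A_i\bigr)^p+\alpha\sum A_i^p\Bigr]\Bigl(\sum A_k\Bigr)-\alpha\sum_{i\neq j}A_i^pA_j .
\]
Apply the induction hypothesis together with Prop.~\ref{prop.basic}(ii) to the bracketed factor (multiplied on the right by $\sum A_k$), obtaining the lower bound $\bigl[\sum_{|I|=m}A_I^p\bigr]\bigl(\sum A_k\bigr)$; then split $\sum A_k=A_I+\sum_{k\notin I}A_k$ inside each summand to turn this into $\sum_{|I|=m}A_I^{p+1}+\sum_{|I|=m}A_I^p\sum_{k\notin I}A_k$. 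Next apply iterated superadditivity, $A_I^p\geqslant\sum_{i\in I}A_i^p$, to the mixed term; since a pair of distinct indices $(i,k)$ with $i\in I$, $k\notin I$ determines the remaining $m-1$ members of $I$ among the $n-2$ indices different from $i,k$, this yields $\sum_{|I|=m}A_I^p\sum_{k\notin I}A_k\geqslant\binom{n-2}{m-1}\sum_{i\neq k}A_i^pA_k=\alpha\sum_{i\neq j}A_i^pA_j$. Combining, the two $\pm\,\alpha\sum_{i\neq j}A_i^pA_j$ terms cancel and what remains is exactly $\sum_{|I|=m}A_I^{p+1}$, closing the induction; the $G$‑immanant version and the scalar inequality~\eqref{eq:pop3} then follow from the representation~\eqref{eq:8} as in the earlier sections.

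The calculus here is routine; the one place I expect to need care is the combinatorial bookkeeping, namely verifying that the coefficients $\binom{n-2}{m-1}$ and $\binom{n-2}{m-2}$ are precisely the ones that make Pascal's identity close the base case and make the mixed terms cancel in the inductive step. This is the general‑$m$ analogue of the ``easy to verify'' closing paragraph in the proof of Theorem~\ref{thm:pop_hl2}, and it reduces to the elementary count of the $m$‑subsets of $[n]$ that contain one prescribed index and avoid another.
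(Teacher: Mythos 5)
Your proof is correct, and it is precisely the argument the paper has in mind: the authors omit the proof of Theorem~\ref{thm:pop_hl3}, stating only that "it can be obtained by following the inductive technique developed above," i.e.\ the induction on $p$ used for Theorem~\ref{thm:pop_hl2}. Your base case via Pascal's rule, the decomposition $\sum A_k = A_I + \sum_{k\notin I}A_k$, and the count of $\binom{n-2}{m-1}$ subsets containing $i$ and avoiding $k$ all check out, so you have supplied exactly the omitted generalization of the $m=2$ argument.
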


Instead of proving Theorem~\ref{thm:pop_hl3}, we move on to the most general Popoviciu type inequality for tensors, motivated by a scalar case partially treated in~\citep{aops}.  
\begin{theorem}
  \label{thm:pop_hl4}
  Let $A_1,\dots,A_n$ be positive definite operators. Let $S_{k,[n]}^p$ be defined as in \eqref{eq:7}. Then for integers $ 1\leqslant k<\ell< m\leqslant n$, 
  \begin{equation}
    \label{eq:pop_hl4}
    \frac{m-\ell}{k\binom nk} S_{k,[n]}^p +\frac{\ell-k}{m\binom nm} S_{m,[n]}^p 
    \geqslant\dfrac{m-k}{\ell\binom n\ell} S_{\ell,[n]}^p.
  \end{equation}
\end{theorem}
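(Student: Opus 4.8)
The plan is to reduce \eqref{eq:pop_hl4} to the convexity-type inequality of Theorem~\ref{thm:pop_hl3}, following the strategy used throughout the paper of translating scalar Popoviciu-style inequalities into tensor statements, but here going in the opposite direction: using one tensor inequality already at our disposal to bootstrap a more symmetric one. The key observation is that each of the three quantities $S_{k,[n]}^p/(k\binom nk)$, $S_{\ell,[n]}^p/(\ell\binom n\ell)$, $S_{m,[n]}^p/(m\binom nm)$ is, up to a normalizing constant, the ``average $p$-th power tensor over all $j$-subsets, weighted by $1/j$,'' and the coefficients $m-\ell$, $\ell-k$, $m-k$ with $m-k=(m-\ell)+(\ell-k)$ are exactly those of a three-point convexity (divided-difference) identity. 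Concretely, I would first show that \eqref{eq:pop_hl3} can be rewritten, after dividing by $\binom{n-2}{m-1}+\binom{n-2}{m-2}=\binom{n-1}{m-1}$ and using $\binom nk\cdot\frac{k}{n}=\binom{n-1}{k-1}$, as a statement of the form $\lambda\,\overline S_1 + (1-\lambda)\,\overline S_n \geqslant \overline S_m$ where $\overline S_j := S_{j,[n]}^p/\bigl(j\binom nj\bigr)$ and $\lambda\in[0,1]$ depends only on $m,n$; that is, $\overline S_m$ lies below the chord joining $\overline S_1$ and $\overline S_n$ in the variable $j$.

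Next I would establish the same ``chord'' inequality for an arbitrary triple $k<\ell<m$ rather than just $k=1$, $m=n$. For this I would apply Theorem~\ref{thm:pop_hl3} not to the full family $A_1,\dots,A_n$ but, for each $m$-subset $I\subseteq[n]$, to the sub-family $\{A_i\}_{i\in I}$ with its own parameters: run \eqref{eq:pop_hl3} with ground set $I$ (so ``$n$''$=m$) and inner size ``$m$''$=\ell$, then sum over all $m$-subsets $I$. Each $\ell$-subset of $[n]$ that happens to lie inside $I$ contributes $(\sum_{i\in J}A_i)^p$, and a fixed $\ell$-subset $J$ lies in exactly $\binom{n-\ell}{m-\ell}$ of the $m$-subsets $I$; similarly a fixed singleton lies in $\binom{n-1}{m-1}$ of them. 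Collecting the multiplicities converts the summed-up family of \eqref{eq:pop_hl3}'s into a single inequality relating $S_{1,[n]}^p$, $S_{\ell,[n]}^p$, and $S_{m,[n]}^p$; after normalization this is again a chord inequality, now with endpoints $j=1$ and $j=m$ and interior point $j=\ell$. Iterating (or combining two such chord inequalities, one with endpoints $\{k,m\}$ through $\ell$ obtained by the same averaging with ground sets of size $m$ but also restricting away from small subsets) yields the three-term statement with endpoints $k$ and $m$, which is precisely \eqref{eq:pop_hl4} once one checks that the combinatorial coefficients $\binom{n-\ell}{m-\ell}$, $\binom{n-1}{m-1}$, etc., reassemble into $\frac{m-\ell}{k\binom nk}$, $\frac{\ell-k}{m\binom nm}$, $\frac{m-k}{\ell\binom n\ell}$.

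The main obstacle I anticipate is the bookkeeping in this last step: verifying that the binomial multiplicities coming from ``a fixed $j$-subset sits inside $\binom{n-j}{\,\cdot\,-j}$ larger subsets'' combine with the Popoviciu coefficients $\binom{m-2}{\ell-1}$, $\binom{m-2}{\ell-2}$ of \eqref{eq:pop_hl3} to give exactly the rational coefficients $\frac{m-\ell}{k\binom nk}$ etc.\ in \eqref{eq:pop_hl4}, and in particular that the identity $m-k=(m-\ell)+(\ell-k)$ (a trivial divided-difference fact for scalars) is the shadow of a genuine telescoping of these multiplicities. A clean way to organize this is to normalize everything through the quantities $\overline S_j = S_{j,[n]}^p/(j\binom nj)$ from the outset and prove the single clean statement ``$j\mapsto \overline S_j$ is midpoint-concave along chords,'' i.e.\ for $k<\ell<m$ one has $\overline S_\ell \leqslant \frac{m-\ell}{m-k}\overline S_k + \frac{\ell-k}{m-k}\overline S_m$; expanding this back out then gives \eqref{eq:pop_hl4} after multiplying through by $m-k$. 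The base case of that concavity statement for consecutive $(k,k+1,k+2)$ is exactly the content of Theorem~\ref{thm:pop_hl3} applied to $m$-subsets of size $k+2$ with inner size $k+1$, and the general case follows by the standard fact that a function whose consecutive second differences are $\leqslant 0$ lies below all its chords.

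Alternatively, should the averaging argument prove delicate, one can mimic the proof of Theorem~\ref{thm:main} directly: induct on $p$, multiply the $(p-1)$ version of \eqref{eq:pop_hl4} by $(\sum_i A_i)$ on the right, use Prop.~\ref{prop.basic}(ii), split off the ``diagonal'' tensor powers $(A_{i_1}+\cdots+A_{i_j})^{p}$ from the mixed remainder, apply superadditivity Prop.~\ref{prop.basic}(iv) to bound each mixed block from below by $\sum (A_{i_s}^{p-1})(\cdots)$, and finally verify that the resulting scalar coefficients cancel --- which, again, reduces to the identity $\frac{m-\ell}{k\binom nk}+\frac{\ell-k}{m\binom nm}=\frac{m-k}{\ell\binom n\ell}$-weighted counting of how often each product $A_i^{p-1}A_j$ appears. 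Either route, the representation \eqref{eq:8} then immediately yields the corresponding $G$-immanant (in particular determinantal and permanental) Popoviciu inequality as a corollary, exactly as in Corollaries~\ref{cor.wolf} and \ref{cor.dxg}.
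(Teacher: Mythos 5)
The paper itself omits the proof of Theorem~\ref{thm:pop_hl4}, so the only benchmark is its remark that the result follows from ``the inductive technique developed above''; unfortunately neither of your two routes closes the argument, and each has a concrete gap. In your main route the averaging step is sound as far as it goes: summing \eqref{eq:pop_hl3} (itself left unproven in the paper, note) over all $m$-subsets $I\subseteq[n]$ with inner size $\ell$ does reassemble, via $\binom{n}{\ell}\binom{n-\ell}{m-\ell}=\binom{n}{m}\binom{m}{\ell}$, into \eqref{eq:pop_hl4} --- but only for triples of the form $(1,\ell,m)$. Your claimed base case is misidentified: \eqref{eq:pop_hl3} on a ground set of size $k+2$ with inner size $k+1$ compares subsets of sizes $1$, $k+1$, $k+2$, not $k$, $k+1$, $k+2$, so after averaging it yields the chord with endpoints $\{1,k+2\}$, not the consecutive second difference at $(k,k+1,k+2)$. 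And chord inequalities anchored at $j=1$ do not imply chords with arbitrary left endpoint: writing $\overline{S}_j:=S_{j,[n]}^p/(j\binom{n}{j})$, the abstract sequence $\overline{S}_1=\overline{S}_2=0$, $\overline{S}_3=1$, $\overline{S}_4=1.5$ satisfies every inequality $\overline{S}_\ell\leqslant\frac{m-\ell}{m-1}\overline{S}_1+\frac{\ell-1}{m-1}\overline{S}_m$ yet violates the chord from $2$ to $4$ at $3$. The passage from left endpoint $1$ to general $k$ is exactly the content of the theorem beyond Theorem~\ref{thm:pop_hl3}, and ``restricting away from small subsets'' is not an operation any of the available inequalities supports.

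Your fallback route fails for a sharper reason: the mixed-term inequality that the induction on $p$ requires is simply false. Setting $M_j:=\sum_{|I|=j}\bigl(\nlsum_{i\in I}A_i\bigr)^{p}\bigl(\nlsum_{i\notin I}A_i\bigr)$, multiplying the inductive hypothesis on the right by $\sum_iA_i$ leaves you needing $\frac{m-k}{\ell\binom{n}{\ell}}M_\ell\geqslant\frac{m-\ell}{k\binom{n}{k}}M_k+\frac{\ell-k}{m\binom{n}{m}}M_m$. Taking all $A_i=A$ this reduces to the scalar inequality $(m-k)(n-\ell)\ell^{p-1}\geqslant(m-\ell)(n-k)k^{p-1}+(\ell-k)(n-m)m^{p-1}$, i.e.\ concavity of $j\mapsto(n-j)j^{p-1}$ on $\{k,\ell,m\}$, which fails: for $n=10$, $(k,\ell,m)=(1,2,3)$, $p=4$ the left side is $128$ and the right side is $198$. (The theorem itself survives in this example because the inductive hypothesis holds with slack for equal matrices, but that slack is precisely what the ``multiply by $\sum A_i$ and cancel the mixed terms'' template discards.) So the coefficient verification you defer is not bookkeeping at all --- the correction term has the wrong sign, and the proofs of Theorems~\ref{thm:one}, \ref{thm:main} and \ref{thm:pop_hl2} do not transplant to general $k<\ell<m$ without a genuinely new idea.
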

We omit the proof for brevity; it can be obtained by following the inductive technique developed above. It should be mentioned that the corresponding inequality for convex functions only holds for certain choices of $k,\ell,m$~\citep{aops}. 

This example shows again that the operator inequalities are weaker than the corresponding ones vor vectors or convex functions.

In the opposite direction, we might ask whether the ``convex analogue'' of Theorem~\ref{thm:main} holds:
\begin{conj}
\label{conj:thm_main}
If $f$ is a convex function on a real interval $I$ and $x_1,x_2, ...,x_n \in I$, then
\begin{equation}
\label{eq:hl_pop}
\sum f(x_i)+3\nlsum_3 f\Bigl(\frac{x_i+x_j+x_k}3\Bigr)+\cdots\\
\geqslant 2\sum_{i<j} f\Bigl(\frac{x_i+x_j }2\Bigr)+4\nlsum_4 f\Bigl(\frac{x_i+x_j+x_k+x_l}4\Bigr)+\cdots \\
\end{equation}
\end{conj}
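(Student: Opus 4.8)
The plan is to handle \eqref{eq:hl_pop} by the standard reduction for inequalities that are linear in a convex argument: test it on the extreme rays of the convex cone, i.e.\ on the affine functions $1$, $x$ and on the hinges $(x-t)_+$, $t\in I$. (Imitating the double induction behind Theorem~\ref{thm:main} does not seem to work here: there is no exponent $p$ to induct on, and Jensen's inequality \eqref{eq:convex} --- the only obvious stand-in for superadditivity --- moves $k\,f\!\bigl(\tfrac1k\sum_{i\in I}x_i\bigr)$ the opposite way from Proposition~\ref{prop.basic}(iv).) Writing $\Phi(f)$ for the left side of \eqref{eq:hl_pop} minus the right side, the map $f\mapsto\Phi(f)$ is a finite linear combination of point evaluations, hence continuous under pointwise convergence; since every convex $f$ on a bounded subinterval of $I$ is a limit of functions $\alpha+\beta x+\sum_j c_j(x-t_j)_+$ with $c_j\geqslant0$, it suffices to check $\Phi(1)=\Phi(x)=0$ and $\Phi\bigl((x-t)_+\bigr)\geqslant0$ for all $t$.

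First I would dispatch the affine cases: a size-$k$ subset contributes $k$ to $\Phi(1)$ and $\sum_{i\in I}x_i$ to $\Phi(x)$, and $\binom{n-1}{k-1}$ of them pass through a fixed index, so $\Phi(1)=\sum_{k\geqslant1}(-1)^{k+1}k\binom nk$ and $\Phi(x)=\bigl(\sum_i x_i\bigr)\sum_{k\geqslant1}(-1)^{k+1}\binom{n-1}{k-1}$, both zero for $n\geqslant2$. For the hinges, put $y_i:=x_i-t$ and use $k\,\bigl(\tfrac1k\sum_{i\in I}x_i-t\bigr)_+=\bigl(\sum_{i\in I}y_i\bigr)_+$ (true because $k>0$): the coefficients and averages cancel out, and after writing $(s)_+=\tfrac12(|s|+s)$ and discarding the linear part (which again sums to $0$ by the $\binom{n-1}{k-1}$ identity), \eqref{eq:hl_pop} reduces to the single scalar inequality
\begin{equation*}
\sum_{k=1}^{n}(-1)^{k+1}\sum_{1\leqslant i_1<\cdots<i_k\leqslant n}\bigl|y_{i_1}+\cdots+y_{i_k}\bigr|\;\geqslant\;0,\qquad y_1,\dots,y_n\in\mathbb R.
\end{equation*}

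The hard part is precisely this last step, and I expect it to be not merely hard but fatal: the displayed sum is exactly the one-dimensional case of Freudenthal's alternating norm sum, which, as recalled after Proposition~\ref{prop:radu}, Luxemburg showed fails for $n\geqslant4$. Indeed $y=(1,1,1,-2)$ gives $5-9+3-1<0$, which is to say that $f(x)=\max(x,0)$ with $(x_1,\dots,x_4)=(1,1,1,-2)$ makes the left side of \eqref{eq:hl_pop} equal $6$ and the right side equal $7$; and since the sign of that alternating sum depends on the configuration (e.g.\ $y=(2,2,-3,-3)$ makes it positive), no fixed parity-arrangement of the two sides rescues \eqref{eq:hl_pop} for $n\geqslant4$. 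So the realistic outcome of this program is: \eqref{eq:hl_pop} holds for $n=3$, where it is just Popoviciu's inequality \eqref{eq:pop1}, while for $n\geqslant4$ the honest replacement is a non-alternating, Radu-type bound along the lines of Proposition~\ref{prop:radu} rather than \eqref{eq:hl_pop} itself --- formulating and proving that is what I would pursue next.
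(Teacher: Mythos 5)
Your conclusion agrees with the paper's: Conjecture~\ref{conj:thm_main} is false for $n\geqslant 4$, and the paper disposes of it in a single line with the counterexample $f(x)=|x|$, $(x_i)=(-10,1,1,9)$, for which the left side of \eqref{eq:hl_pop} is $40$ and the right side is $42$. Your counterexample $f(x)=\max(x,0)$, $(x_i)=(1,1,1,-2)$, giving $6<7$, is equally valid; I checked the arithmetic, as well as the hinge identity $k\,f\bigl(\tfrac1k\sum_{i\in I}x_i\bigr)=\bigl(\sum_{i\in I}(x_i-t)\bigr)_+$ and the vanishing of your functional $\Phi$ on affine functions via $\sum_k(-1)^{k+1}\binom{n-1}{k-1}=(1-1)^{n-1}=0$. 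What your route adds beyond the paper's bare example is a structural diagnosis: after splitting $(s)_+=\tfrac12(|s|+s)$ and discarding the linear part, validity of \eqref{eq:hl_pop} for all convex $f$ is equivalent to nonnegativity of the one-dimensional Freudenthal alternating sum $\sum_{I}(-1)^{|I|+1}\bigl|\sum_{i\in I}y_i\bigr|$ --- precisely the quantity the paper already cites (after Proposition~\ref{prop:radu}) as failing for $n\geqslant 4$ by Luxemburg's observation. This explains at once why $n=3$ survives (there \eqref{eq:hl_pop} is exactly Popoviciu's inequality, Proposition~\ref{prop:pop1}) and why no reshuffling of the subset sizes between the two sides can repair the statement, since your two examples $y=(1,1,1,-2)$ and $y=(2,2,-3,-3)$ show the alternating sum takes both signs. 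The paper's refutation is shorter; yours locates the failure, certifies that the counterexample is not an accident of the particular numbers, and points toward the correct non-alternating, Radu-type replacement.
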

But this fails for $n=4$ with $f(x)=|x|$ and $(x_i)=(-10,1,1,9)$.

\subsection*{Acknowledgments}
We are thankful to the MathOverflow site for bringing the authors together to work on this paper (see \href{http://mathoverflow.net/q/182181}{mathoverflow.net/q/182181}).

\bibliographystyle{abbrvnat}
\setlength{\bibsep}{2pt}

\end{document}